\documentclass{amsart}
\usepackage{amscd}
\usepackage[arrow,matrix]{xy}
\usepackage{graphicx}
\usepackage{amsmath}
\usepackage{amsmath, latexsym, amssymb}
\usepackage{color}

\begin{document}
\hsize 125 mm
\vsize 185 mm
\input xypic
\numberwithin{equation}{section}
\theoremstyle{plain}
\newtheorem{lemma}{Lemma}[section]
\newtheorem{proposition}[lemma]{Proposition}
\newtheorem{theorem}[lemma]{Theorem}
\newtheorem{corollary}[lemma]{Corollary}
\newtheorem{problem}[lemma]{Problem}

\theoremstyle{definition}
\newtheorem{definition}[lemma]{Definition}
\newtheorem{remark}[lemma]{Remark}
\newtheorem{example}[lemma]{Example}

\DeclareGraphicsRule{.tif}{png}{.png}{`convert #1 `dirname #1`/`basename #1 .tif`.png} 
\newcommand{\R}{{\mathbb R}}
\newcommand{\C}{{\mathbb C}}
\newcommand{\F}{{\mathbb F}}
\renewcommand{\O}{{\mathbb O}}
\newcommand{\Z}{{\mathbb Z}} 
\newcommand{\N}{{\mathbb N}}
\newcommand{\Q}{{\mathbb Q}}
\renewcommand{\H}{{\mathbb H}}

\newcommand{\Aa}{{\mathcal A}}
\newcommand{\Bb}{{\mathcal B}}
\newcommand{\Cc}{{\mathcal C}}    
\newcommand{\Dd}{{\mathcal D}}
\newcommand{\Ee}{{\mathcal E}}
\newcommand{\Ff}{{\mathcal F}}
\newcommand{\Gg}{{\mathcal G}}    
\newcommand{\Hh}{{\mathcal H}}
\newcommand{\Kk}{{\mathcal K}}
\newcommand{\Ii}{{\mathcal I}}
\newcommand{\Jj}{{\mathcal J}}
\newcommand{\Ll}{{\mathcal L}}    
\newcommand{\Mm}{{\mathcal M}}    
\newcommand{\Nn}{{\mathcal N}}
\newcommand{\Oo}{{\mathcal O}}
\newcommand{\Pp}{{\mathcal P}}
\newcommand{\Qq}{{\mathcal Q}}
\newcommand{\Rr}{{\mathcal R}}
\newcommand{\Ss}{{\mathcal S}}
\newcommand{\Tt}{{\mathcal T}}
\newcommand{\Uu}{{\mathcal U}}
\newcommand{\Vv}{{\mathcal V}}
\newcommand{\Ww}{{\mathcal W}}
\newcommand{\Xx}{{\mathcal X}}
\newcommand{\Yy}{{\mathcal Y}}
\newcommand{\Zz}{{\mathcal Z}}

\newcommand{\zt}{{\tilde z}}
\newcommand{\xt}{{\tilde x}}
\newcommand{\Ht}{\widetilde{H}}
\newcommand{\ut}{{\tilde u}}
\newcommand{\Mt}{{\widetilde M}}
\newcommand{\Llt}{{\widetilde{\mathcal L}}}
\newcommand{\yt}{{\tilde y}}
\newcommand{\vt}{{\tilde v}}
\newcommand{\Ppt}{{\widetilde{\mathcal P}}}
\newcommand{\bp }{{\bar \partial}} 
\newcommand{\ad}{{\rm ad}}
\newcommand{\Om}{{\Omega}}
\newcommand{\om}{{\omega}}
\newcommand{\eps}{{\varepsilon}}
\newcommand{\Di}{{\rm Diff}}

\renewcommand{\a}{{\mathfrak a}}
\renewcommand{\b}{{\mathfrak b}}
\newcommand{\e}{{\mathfrak e}}
\renewcommand{\k}{{\mathfrak k}}
\newcommand{\pg}{{\mathfrak p}}
\newcommand{\g}{{\mathfrak g}}
\newcommand{\gl}{{\mathfrak gl}}
\newcommand{\h}{{\mathfrak h}}
\renewcommand{\l}{{\mathfrak l}}
\newcommand{\sm}{{\mathfrak m}}
\newcommand{\n}{{\mathfrak n}}
\newcommand{\s}{{\mathfrak s}}
\renewcommand{\o}{{\mathfrak o}}
\newcommand{\so}{{\mathfrak so}}
\renewcommand{\u}{{\mathfrak u}}
\newcommand{\su}{{\mathfrak su}}

\newcommand{\ssl}{{\mathfrak sl}}
\newcommand{\ssp}{{\mathfrak sp}}
\renewcommand{\t}{{\mathfrak t }}
\newcommand{\Cinf}{C^{\infty}}
\newcommand{\la}{\langle}
\newcommand{\ra}{\rangle}
\newcommand{\half}{\scriptstyle\frac{1}{2}}
\newcommand{\p}{{\partial}}
\newcommand{\notsub}{\not\subset}
\newcommand{\iI}{{I}}               
\newcommand{\bI}{{\partial I}}      
\newcommand{\LRA}{\Longrightarrow}
\newcommand{\LLA}{\Longleftarrow}
\newcommand{\lra}{\longrightarrow}
\newcommand{\LLR}{\Longleftrightarrow}
\newcommand{\lla}{\longleftarrow}
\newcommand{\INTO}{\hookrightarrow}
\newcommand{\supp}{\operatorname{supp}}

\newcommand{\QED}{\hfill$\Box$\medskip}
\newcommand{\UuU}{\Upsilon _{\delta}(H_0) \times \Uu _{\delta} (J_0)}
\newcommand{\bm}{\boldmath}

\title[Poisson smooth structures on stratified symplectic spaces]{\large   Poisson smooth structures on stratified symplectic spaces}
\author{H\^ong V\^an L\^e, Petr Somberg and Ji\v ri Van\v zura} 


\medskip

\abstract   In this   paper we  introduce the notion of a smooth structure  on a  stratified  space, the notion of a Poisson smooth structure  and the notion of a weakly symplectic smooth structure on a   stratified symplectic space, refining the concept of a  stratified  symplectic Poisson  algebra   introduced by
Sjamaar and Lerman. We show that these smooth spaces possess   several important properties, e.g. the existence of  smooth partitions of unity.
Furthermore,  under  mild  conditions  many   properties of  a symplectic manifold can be extended to a   symplectic stratified space provided with a smooth Poisson structure,
 e.g. the existence  and
uniqueness of a Hamiltonian flow, the isomorphism between  the Brylinski-Poisson homology and the de Rham homology,  the
existence of a Leftschetz decomposition on
a symplectic  stratified space.    We give many examples   of  stratified symplectic spaces  possessing a Poisson smooth structure  which is also weakly symplectic.
\endabstract

\maketitle
\tableofcontents

{\it  AMSC: 51H25, 53D05, 53D17}

{\it Key words:  $C^\infty$-ring, stratified space,  symplectic form, Poisson  structure}

\section{Introduction}

Many classical problems on various classes of topological spaces reduce to the quest 
for its appropriate functional structure. Examples of topological 
spaces we are interested 
in comprise stratified spaces equipped with an additional structure of geometrical origin.  
Due to the lack of canonical notion of the sheaf of smooth (or analytic) functions on 
such spaces one has  to define  a  smooth structure with all derived smooth (or analytical)
notions  such that the obtained smooth structure   satisfies   good formal   properties. 

In this paper we  study  smooth structures on    stratified spaces, developing the ideas  in \cite{LSV2010}. We observe  that many properties  of smooth structures
on pseudomanifolds  with  isolated  conical singularities  also hold  for a larger  class of  locally trivial spaces with singularities  with cone as typical fiber, if one poses a mild, natural, local condition on these smooth structures (Definition \ref{smooths}). In this extension  we have to take care of (possibly disconnected)  regular strata of different dimensions. 
A large part of our note concerns with  compatible smooth structures on stratified  symplectic spaces.  Stratified symplectic spaces appear abundantly in geometry and mathematical physics  \cite{Hueb2004}, \cite{Hueb2007}, \cite{Karshon1992}.
They  are  subjects of intensive  study in symplectic geometry
since  nineties   \cite{SL1991}, \cite{BL1997}, \cite{CR2002},  \cite{Pflaum2000}, \cite{Hueb2004}.  In  \cite{Beauville2000},  \cite{FU2006}  and many other papers in this direction the authors considered complex manifolds $M^{2n}_\C$ with a holomorphic symplectic form $\om^2$, which
can be turned into  a  symplectic form $\tilde \om^2$ on  differentiable manifolds $M^{2n}_\C$  of real dimension $4n$ by setting $\tilde \om^2 :=Re\, (\om ^2) + Im\, (\om^2)$.  The theory of smooth structures on  general stratified
symplectic spaces  has  been  introduced by   Sjamaar and    Lerman  \cite{SL1991},  then developed by  many others \cite{BL1997},  \cite{Pflaum2000}, \cite{Hueb2004} (in  Remarks \ref{rsmooth}.3, \ref{compsl} and Example \ref{quot} we compare  our notion of  a smooth structure  on a stratified symplectic space with  that given  by Sjamaar-Lerman).  Pflaum  introduced smooth structures on stratified  spaces by means of a maximal atlas (Remark \ref{rsmooth}.3), and with reference to a smooth structure, he developed extensions to stratified spaces of standard differential geometric concepts  \cite{Pflaum2000}.  Huebschmann  refined  the notion of symplectic  stratified spaces  by  introducing the notion of stratified polarization \cite{Hueb2004}. 
 Our axiomatic  approach  to the notion of  a smooth structure uses  heavily  the notion of $C^\infty$-algebras defined in \cite{MR1991}. We refer the reader to \cite{GS2003}  for  the notion of  $C^\infty$-differentiable space developed by Gonsalez and de Salas, which  also  grew   from the notion of  $C^\infty$-algebra, but  they  did not  treat  stratified spaces.  
In   our study    we discover the  existence of    a   class  of symplectic stratified spaces which can be supplied with a  Poisson  and weakly symplecitc smooth structure. 
This  class  is large enough to encompass  basic examples of symplectic singular spaces considered in \cite{SL1991} (Example \ref{quot}, Proposition \ref{quote}) as well as many important singularities on  the closure of  adjoint orbits  of nilpotent elements of  complex Lie algebras (Example \ref{nil}, Proposition \ref{pois1}).

The structure of this  paper is as follows.
In chapter 2 we  introduce the notion of a  smooth structure on  
a  stratified space (Definition \ref{smooths})  and compare our concept  of a smooth structure with some   other concepts (Remarks \ref{rsmooth}.2-3).
We  prove that  a smooth stratified space possesses   several important properties, e.g. the existence of smooth  partitions of unity  (Lemma \ref{main1}, Proposition \ref{fin} and Corollary \ref{germ}), which will be needed in later sections (Remarks \ref{hsl}.2,  \ref{cav}).  In chapter 3 we  study natural smooth structures on stratified   symplectic spaces (Definition \ref{spos}).
We show that, under mild conditions,    stratified symplectic spaces $(X, \om)$ equipped with a  Poisson smooth structure   possess  
a variety of basic properties of  smooth symplectic manifolds, e.g. 
the existence and uniqueness of a Hamiltonian flow, the isomorphism between the Brylinski-Poisson 
homology and  the de Rham homology,  and the existence of  a Leftschetz decomposition (Theorems \ref{hodge},  \ref{ham}, \ref{harm},  Proposition  \ref{hlv}).  We also  show many examples of  weakly symplectic smooth structures  and Poisson smooth structures (Examples \ref{quot},  \ref{nil}).

\section{Stratified spaces  and   their  smooth   structures}\label{s1}
In this section we recall the notion of a stratified space following  Goresky's and MacPherson's  concept \cite[p.36]{GM1988},  \cite[\S 1]{SL1991}
(Definition \ref{strati}). Then we introduce the notion of a  smooth structure on a   stratified space (Definition \ref{smooths}).  Our  concept of  a smooth structure
on a stratified space is a natural extension  of our concept  of a smooth structure on a pseudomanifold with isolated conical singularities given in \cite[\S 2]{LSV2010}, using the key notion of a product smooth structure (Remark \ref{rsmooth}.2). (The notion of a product smooth structure has   been introduced by Mostow in \cite[\S 3]{Mostow1979} and has been used by many authors).  We prove  several important properties of a smooth structure on a stratified space, e.g. the existence of smooth partitions of unity and its consequences  (Lemma \ref{main1}, Proposition \ref{fin}, Corollary \ref{germ}), the existence of a locally smoothly contractible, resolvable smooth structure on  pseudomanifolds with edges (Lemma \ref{canres}).  We show that a resolvable smooth structure satisfying a mild condition  is not finitely generated  (Proposition \ref{infg}).  We introduce the notion of  a smooth differential form  and  a smooth Zariski  vector field (Definition \ref{smoothf}).  We compare our concept of a smooth structure  with some other
concept (Remark \ref{rsmooth}.3).

\subsection{Stratified  spaces} We begin with the notion of a decomposed space.

\begin{definition}\label{decom}(\cite[p.36]{GM1988}, \cite[Definition 1.1]{SL1991})  Let $X$  be a Hausdorff and  paracompact topological space 
 and let $\Ss$ be a partially ordered set  with ordering denoted
by $\le$.  {\it An $\Ss$-decomposition of $X$} is a  locally finite collection of disjoint locally closed manifolds
$S_i\subset X$ (one for each $ i \in \Ss$) called {\it strata} such that 

1) $ X = \cup _{i \in \Ss} S_i$;

2) $S_i \cap \bar  S_j  \not = \emptyset \LLR  S_i \subset \bar S_j \LLR i \le j .$

We define {\it the depth of a stratum $S$} as follows 
$$depth _X\, S : = \sup \{n | \: \text{ there exist  strata } S = S_0 < S_1 < \cdots  < S_n\}.$$

We define {\it the depth of $X$} to be the number $depth \, X: = \sup_{i \in \Ss} depth_X\,S_i.$ {\it The dimension of $X$} is defined
to be the maximal dimension of its strata.
\end{definition}
As in \cite{SL1991} we  consider only   finite-dimensional  decomposed spaces  $X$. We  call   a stratum $S\subset X$ {\it singular},  if  there is another stratum $S'\subset X$ such that $S\subset \bar S'$. Otherwise  $S$ is called {\it regular}.  We call
$$X^{reg}: = \{ \cup  S | S \text{ is a  regular  stratum of }   X\}$$
{\it the regular part (component) of $X$}.  Clearly $\overline{X^{reg}} = X$.

Now we  specify  a subclass  of decomposed spaces  consisting of   locally trivial   spaces with cone as typical fiber.  Recall that   a cone $cL$  over  a  topological space  $L$ is the topological space $L \times [0, \infty ) /L\times \{0\}$.  Let $[z,t]$ denote the image of $(z,t)$ in a cone  $cL$  under the projection $\pi: L\times [0,\infty) \to cL$. We denote by $cL (\eps)$  the open subset $\{ [z, t] \in cL\| \:  t< \eps \}$.

\begin{definition}\label{strati}   (\cite{GM1980}, \cite[Definition 1.7]{SL1991}) A decomposed space $X$ is called  {\it a stratified space} if the strata of $X$ satisfy the following condition defined recursively.
Given a point  $x$ in a stratum $S$ there exist  an open neighborhood  $U(x)$ of $x$ in $X$, an open ball $B_x$ around $x$ in $S$, a  compact stratified  space $L$, called the link of $x$, and a homeomorphism $\phi_x: U(x) \to B_x \times cL(1) $ that preserves the decomposition.
\end{definition}

{\begin{remark} \label{rem:neu} 1.  It is well-known  that   a Whitney stratified subspace in $\R^n$  is a stratified  space  in sense of Definition \ref{strati} 
\cite{Mather1970}. %

2. In the literature  there are     different   concepts of  a stratified space,  see e.g.  \cite[Chapter 1]{Pflaum2000} for a discussion. 

\end{remark}

\begin{example}\label{edge} 1.
Among important  examples  of stratified spaces  of depth 1 are pseudomanifolds with edges, see e.g. \cite{NSSS2004}. Let us recall the definition of a pseudomanifold with edges. Suppose that $M$ is a compact connected smooth manifold with boundary $\p M$,
and suppose that $\p M$ is the total space of a smooth locally trivial bundle $\pi: \p M \to N$
over a closed smooth base $N$ whose fiber is a closed smooth manifold.  The topological space  $X$ obtained  by gluing  $M$ to $N$    with help of $\pi$ (i.e. the points in each fiber $\pi^{-1}(s)$ are identified with $s \in N$)  is called {\it a pseudomanifold with edges} corresponding to the pair $(M, \pi)$. The natural surjective map $M \to X$ which is the identity on $M \setminus \p M$ is denoted by $\bar \pi$.  In general, $N$ need not be connected, and the  connected components of $N$ are called {\it edges} of $X$.  Clearly, $X= (X \setminus N)\cup N$ is a decomposed space, moreover $X\setminus N$ is an open connected stratum of $X$. Now we will prove that $N$ satisfies the condition in Definition \ref{strati}. For $s \in N$ let $L$ be the fiber $\pi ^{-1} (s) \subset \p M$  and $B$ be an open  neighborhood of $s$ in $N$  such that $\pi ^{-1} (B) = B \times \pi ^{-1} (s)$.
Let $\pi ^{-1} (N) _\eps$ be a collar neighborhood  of the boundary component $\pi ^{-1} (N)\subset \p M$  in $M$  provided with  a  trivialization $(p, t) : \pi^{-1} (N)_\eps  \to \pi^{-1} (N) \times [0, \eps)$, where $p$ is 
a  smooth retraction $\pi ^{-1} (N)_\eps\to  \pi^{-1}(N)$.   Set $U(s) :=\bar \pi (p^{-1} \circ \pi ^{-1} (B))$.  We define    a trivialization $\phi_s : U(s) \to B \times cL(1)$  by $\phi _s (x) = (\pi \circ p (x), [t, p(x)])$. This shows that $X$ is a stratified space of depth 1.
An important  class of pseudomanifolds with  edges consists of {\it   pseudomanifolds $X$ with isolated conical singularities}, if its edges $X_i$   of $X$ are just points $s_i$  of $X$.

2.  If $L$ is a compact stratified space, then  the cone $cL$ is also a stratified space.

3. If $L_1$ and $L_2$ are stratified spaces, then $L_1 \times L_2$ is a stratified space.  In particular,
a product of cones $ cL_1  \times cL_2  = c ( L_1 \times L_2 \times [0,1])$  has the following decomposition:
$c (L_1 \times L_2 \times [0,1]) = \{ pt \} \cup L_1 \times (0,1) \cup L_2 \times (0,1)  \cup L_1 \times L_2 \times (0,1)$. 

\end{example}

\subsection{Smooth structure on stratified spaces  and its simplest properties}
We now introduce the notion of a smooth structure on a   stratified space  $X$, which is a  natural extension
of our notion of a smooth structure on a pseudomanifold with isolated conical singularities (pseudomanifold w.i.c.s.) in \cite{LSV2010} (Remark \ref{rsmooth}.2).

As the notion of  a stratified space  $X$ is  defined  inductively  on the depth of  $X$,  the  notion
of  a smooth structure on $X$  is also defined inductively  on the depth on $X$.

 Since $X$ is  locally modelled as
a product $B\times cL(1)$, we  define the  notion of a smooth structure on   the product $B \times cL(1)$
inductively  on the depth of $L$. 
  More generally, we  introduce the notion of a smooth structure on a stratified space $X$ of depth $k$, recursively
on $k$. The key notion  is  a  product smooth structure (Definition \ref{smooths}.2).

For a  stratum $S \subset  X$  we  consider  the following associated spaces.
\begin{itemize} 
\item $C^\infty _u (S)$ - the space of  all  usual  smooth  functions  on $S$.
\item $C^{\infty}_{u, 0}(S) : = C^ \infty _u(S) \cap  C^0_0 (S)$, where   $C^0_0(S)$ is the space
of all  continuous  functions with  compact support.
\item For  $f \in C^0_0(S)$,  let $j_*(f)\in C^0 _0 (X)$ denote   the unique  extension of $f$ such that $j_*(f) = 0$ if $ x \not \in S$.
\end{itemize}
\begin{definition}\label{smooths} (cf. \cite[Definition 2.2]{LSV2010}) {\it  A smooth structure} on  a stratified space $X$  of depth $k$ is a choice of   a $\R$-subalgebra $C^\infty (X)$ of the algebra $C^0(X)$ of continuous real-valued functions on $X$ that     satisfy the  following   properties.

1.  $C^\infty (X)$ is a   germ-defined $C^\infty$-ring.

2. For any $x \in X$ there exists a local  trivialization $\phi_x: U(x)\to B_x \times cL(1)$  which is a local diffeomorphism of stratified  spaces, i.e.,  $C^\infty(U) =
\phi_x^* (C^\infty (B \times cL(1)))$, where  $C^\infty (B \times cL(1))$  is  {\it a   product smooth structure}. In other words,  
$C^\infty (B \times cL(1))$   is the  germ-defined $C^\infty$-ring   whose  sheaf $SC^\infty (B \times cL(1))$ is generated by $\pi_1^* ( SC^\infty (B))$ and $\pi_2 ^* (SC^\infty (cL(1)))$, where $\pi_1$ and $\pi_2$ are the  projections  from $B\times cL(1)$ to $B$ and $cL(1)$ respectively (cf. \cite[\S 3]{Mostow1979}).

3. A  smooth structure  $C^\infty (cL(1))$ on the cone  over  a  compact stratified  space $L$  must satisfy the following  two additional properties:
(3a) $C^\infty (cL(1))|_{ L \times (0,1)} \subset C^\infty (L \times (0,1))$, and   (3b) 
$j_*[\pi_1^*(C^\infty_{u,0}(0,1))]
\subset C^\infty (c(L(1))$, where $\pi_1: L \times (0,1) \to (0,1)$ is the projection. 
\end{definition}

\begin{lemma}\label{main1}  Any  smooth structure $C^\infty(X)$ on a stratified space $X$ satisfies  the following properties.
 
1. $ C^\infty (X)_{|S} \subset C^\infty_u (S)$  for each stratum $S$  of $X$.

2.  (cf. \cite[Lemma 2.1]{LSV2010}) $C^\infty(X)$ is  partially invertible in the following  sense.  If $f  \in  C^\infty (X)$ is nowhere vanishing, then
$ 1/f  \in C^\infty  (X)$.
\end{lemma}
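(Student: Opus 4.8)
The plan is to treat the two assertions separately, noting that both are local and that the second is essentially immediate. For Property 2, the argument of Remark \ref{inv} used only that $C^\infty$ is a germ-defined $C^\infty$-ring: given $f \in C^\infty(X)$ nowhere vanishing and $x \in X$, shrink a neighborhood $U$ of $x$ so that $f(U)$ avoids some interval $(-\eps,\eps)$, choose a globally defined smooth $\psi \colon \R \to \R$ equal to the identity on $f(U)$ whose image misses $(-\eps/2,\eps/2)$, and set $G = \psi^{-1}$, so that $1/f = G \circ f$ on $U$; since $C^\infty(X)$ is a germ-defined $C^\infty$-ring this forces $1/f \in C^\infty(X)$. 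So the work is in Property 1.

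For Property 1, fix a stratum $S$ of $X$, a point $y \in S$, and $f \in C^\infty(X)$; it suffices to show $f|_S$ is smooth near $y$ in $S$. By Definition \ref{smooths}(2) there is a local trivialization $\phi_y \colon U(y) \to B_y \times cL(1)$ which is a diffeomorphism of stratified spaces with $C^\infty(U(y)) = \phi_y^*\big(C^\infty(B_y \times cL(1))\big)$ for the product smooth structure. Since $\phi_y$ carries strata to strata and restricts to a diffeomorphism on each, and since $y \in S$, it maps $S \cap U(y)$ diffeomorphically onto the base stratum $B_y \times \{[L,0]\}$ of the model, which $\pi_1$ identifies with the open ball $B_y \subset \R^{\dim S}$. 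This reduces the claim to showing that a section of the product smooth structure on $B_y \times cL(1)$, restricted to $B_y \times \{[L,0]\}$, is smooth — the analogue of Lemma \ref{nice1}(1) in arbitrary depth.

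To carry this out, recall from Definition \ref{product} (used in the stratified generality of Definition \ref{smooths}(2)) that $SC^\infty(B_y \times cL(1))$ is generated by $\pi_1^*(SC^\infty(B_y))$ and $\pi_2^*(SC^\infty(cL(1)))$, so near $\phi_y(y)$ every section of it is of the form $G(f_1 \circ \pi_1, \dots, f_n \circ \pi_1, h_1 \circ \pi_2, \dots, h_m \circ \pi_2)$ with $f_i \in C^\infty(B_y)$, $h_j \in C^\infty(cL(1))$, $G \in C^\infty(\R^{n+m})$. On $B_y \times \{[L,0]\}$ the map $\pi_2$ is constantly the cone point $[L,0]$, so each $h_j \circ \pi_2$ becomes the constant $h_j([L,0]) \in \R$, while $\pi_1$ is the identity; hence the restriction equals $G(f_1, \dots, f_n, h_1([L,0]), \dots, h_m([L,0]))$, a smooth function on $B_y$. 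Transporting back through $\phi_y^{-1}$, which is a diffeomorphism on $S$, gives that $f|_{S \cap U(y)}$ is smooth near $y$; as $y$ and $S$ were arbitrary, $C^\infty(X)|_S \subset C^\infty(S)$.

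The one genuinely delicate point is the reduction step: one must exploit that the trivialization in Definition \ref{smooths}(2) may be taken centered at a point of $S$ itself, so that $S$ always corresponds to the base stratum $B_y \times \{[L,0]\}$ of the local model and never to one of the lateral strata $B_y \times (S' \times (0,1))$ for $S'$ a stratum of $L$; once this is recognized, the restriction simply annihilates the cone variables and no recursion on the depth of $L$ is needed. A secondary routine point is to confirm that passing from the presheaf generated by $\pi_1^*(\cdot)$ and $\pi_2^*(\cdot)$ to its sheafification does not enlarge the local description beyond composition with a smooth $G$, so that the displayed form $G(\cdots)$ is legitimate.
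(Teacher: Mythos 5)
Your proposal is correct and follows essentially the same route as the paper: reduce assertion 1 to the local model $B_x\times cL(1)$ via a trivialization centered at a point of $S$, identify $S$ locally with the base stratum $B_x\times\{[L,0]\}$, and observe (as in the proof of Lemma \ref{nice1}) that the pulled-back cone-factor generators become constants under restriction, while assertion 2 is the argument of Remark \ref{inv}. You merely write out explicitly the step the paper compresses into ``repeating the argument in the proof of Lemma \ref{nice1}.''
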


\begin{proof}  We prove the first  assertion  of Lemma \ref{main1} using   induction  on the dimension of $X$.  Clearly  this assertion is valid if $\dim X = 1$.  Since $C^\infty (X)$  is   germ-defined, it suffices  to prove   Lemma \ref{main1}.1
locally. Hence,  w.l.o.g., by Definition  \ref{smooths}.2 we can assume that $X= B \times cL(1)$ and $C^\infty (X)$ is a product smooth structure.
 
First, we consider  the case $S = B\times [L, 0]$.  By Definition \ref{smooths}.2, any smooth function on $X$ is locally  written as $G(f_1, \cdots f_n, h_1, \cdots h_k)$ where $G \in C^\infty (\R^{n +k})$, $f _i \in C^\infty (B)$, $h _i \in C^\infty (cL(1))$. Since the restriction of $h_i$ to $B $  is constant,  $G(f_1, \cdots, f_n, h_1, \cdots, h_k)_{|S}$ is a smooth function on   $B$. Now assume that  $S= B \times S_L \times  (0,1)$, where $S_L$ is a   stratum of $L$. Using the condition  (3a) in Definition \ref{smooths},  we can assume that $X = B \times L \times (0,1)$ and  $C^\infty (X)$  is    generated by $\pi^*_1(SC^\infty (B))$, $\pi^*_2(SC^\infty(L))$, $\pi^*_3(SC^\infty_u(0,1))$,
where $\pi_1, \pi_2, \pi_3$  are the projections  from $X$  to $B$, $L$, $(0,1)$ respectively. Thus, any   function  in $C^\infty (X)$  is locally  of the form
$G(f_i, h_j, t)$ where $f _i \in C^\infty (B)$, $h _j \in C^\infty (L)$  and $t \in (0,1)$.  Using the  induction assumption, noting that $\dim L  \le  \dim X  -1$,
we    have $h_i |_{S_L} \in C^\infty _u (S_L)$.  
Hence $G|_{ B \times S_L  \times (0,1)}  \in C^\infty _u  ( B \times S_L \times (0,1))$. This completes the proof of  the first assertion of  Lemma \ref{main1}.

To prove the last assertion of Lemma \ref{main1} it suffices to show that  locally $1/f$ is   a smooth function.  Since $f\not =0$, shrinking a neighborhood  $U$ of $x$ if necessary,  we can assume that  there is an open interval $(-\eps, \eps)$  which has no intersection with $f(U)$.   Now there exists  a function $\psi :\R \to \R$  such that\\
a)$ \psi|_{f(U)} = Id$,\\
b) $(-\eps/2, \eps/2)$ does not intersect with $\psi (\R)$.

Clearly $G: \R \to \R$ defined by $G(x): = \psi (x) ^{-1}$ is a smooth function.  Note that $1/f(y) = G(f(y))$  for any $y \in U$. This completes the proof of the last  assertion of Lemma \ref{main1}.
\end{proof}

\begin{remark} \label{rsmooth} 1. Denote by $i$ the canonical inclusion $ X^{reg} \to X$. 
Since $X = \overline{X^{reg}}$, the kernel of $i^*:C^\infty (X) \to C^0 (X^{reg}) $ is zero. Lemma \ref{main1}.1 implies that
$i^*(C^\infty (X))$ is a subalgebra of $C^\infty_{u} (X^{reg})$. Roughly speaking, we can regard
$C^\infty(X)$ as a subalgebra of $C^\infty_{u} (X^{reg})$.

2. The condition (3b) in Definition \ref{smooths}    is a relaxing of the condition 3 
  of Definition  2.2 in \cite{LSV2010} for pseudomanifolds w.i.c.s.  that requires $j_*(C^\infty_0 M^{reg}) \subset  C^\infty (M^{reg})$.  In fact, in \cite{LSV2010} (and in the present note) we need  only   the (weaker)  condition (3b)  of Definition \ref{smooths}  for the   existence  of partition  of unity  and nothing more.

3. Our definition of a smooth structure  on a stratified space   is a refinement of   the definition   due to 
Sjammar and Lerman \cite{SL1991}, which  requires a smooth structure to satisfy only  Lemma \ref{main1}.1.   Pflaum introduces smooth structures by means of a maximal atlas; thus a smooth structure appears as an equivalence class of a system of local embeddings into suitable $\R^n$  \cite{Pflaum2000}. 
\end{remark}

We are  going to prove  the existence of  smooth partitions of unity, which is important for later applications (Remark
\ref{hsl}, Remark \ref{cav}).

\begin{proposition}\label{fin} (cf. \cite[Proposition 2.1]{LSV2010})
Let $\{U_i\}_{i\in I}$ be a locally finite open covering of $X$ such that each $U_i$ has a compact closure
$\bar{U}_i$. Then there exists a smooth partition of unity $\{f_i\}_{i\in I}$ subordinate to $\{U_i\}_{i\in I}$. 
\end{proposition}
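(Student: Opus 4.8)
The plan is to run the classical partition-of-unity argument, substituting Lemma \ref{loc} for the usual bump functions and the partial invertibility of Lemma \ref{main1}.2 for ordinary division.

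First I would apply Lemma \ref{top} to $\{U_i\}_{i\in I}$ to obtain a locally finite open covering $\{V_i\}_{i\in I}$ with $\bar V_i\subset U_i$, and then apply Lemma \ref{top} once more to $\{V_i\}_{i\in I}$ to obtain a locally finite open covering $\{W_i\}_{i\in I}$ with $\bar W_i\subset V_i$. Since $\bar W_i$ is a closed subset of the compact set $\bar U_i$, it is itself compact, so Lemma \ref{loc} applied to the compact set $K=\bar W_i$ and the neighborhood $U=V_i$ yields a function $g_i\in C^{\infty}(X)$ with $g_i\ge 0$ on $X$, $g_i>0$ on $\bar W_i$, and $g_i=0$ outside $V_i$; in particular $\supp g_i\subset\bar V_i\subset U_i$.

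Next I would form $g:=\sum_{i\in I}g_i$. Because $\{V_i\}$ is locally finite and each $g_i$ vanishes outside $V_i$, every point of $X$ has a neighborhood on which only finitely many $g_i$ are nonzero; hence $g$ is well defined, and on such a neighborhood it coincides with a finite sum of elements of $C^{\infty}(X)$, so $g\in C^{\infty}(X)$ since $C^{\infty}(X)$ is germ-determined. As $\{W_i\}$ covers $X$ and $g_i>0$ on $W_i\subset\bar W_i$, we get $g>0$ everywhere on $X$, whence $1/g\in C^{\infty}(X)$ by Lemma \ref{main1}.2. Finally I would set $f_i:=g_i\cdot(1/g)$. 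Each $f_i$ lies in $C^{\infty}(X)$ because a $C^{\infty}$-ring is in particular a ring, satisfies $0\le f_i\le 1$, has $\supp f_i\subset\supp g_i\subset U_i$, and $\sum_{i\in I}f_i=g^{-1}\sum_{i\in I}g_i=1$; thus $\{f_i\}_{i\in I}$ is a smooth partition of unity subordinate to $\{U_i\}_{i\in I}$.

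I do not expect a serious obstacle here: the argument is essentially the one for manifolds. The two points that genuinely invoke the theory developed above are (i) that a locally finite sum of functions in $C^{\infty}(X)$ again lies in $C^{\infty}(X)$, which rests on $C^{\infty}(X)$ being germ-determined, and (ii) the passage from $g$ to $1/g$, which is precisely the partial invertibility of Lemma \ref{main1}.2 (proved via Remark \ref{inv}). The compactness hypothesis on the $\bar U_i$ enters only to ensure that the twice-shrunk sets $\bar W_i$ are compact, so that Lemma \ref{loc} is applicable.
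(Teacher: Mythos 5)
Your proof is correct and follows essentially the same route as the paper's: shrink the cover via Lemma \ref{top}, produce bump functions $g_i$ from Lemma \ref{loc}, sum them using local finiteness and the germ-determined property, invert via Lemma \ref{main1}, and set $f_i = g_i/g$. The only (immaterial) difference is that you shrink twice and apply Lemma \ref{loc} to $\bar W_i\subset V_i$, whereas the paper shrinks once and applies it to $\bar V_i$ inside an intermediate open set $W_i$.
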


\begin{proof}  This  is a local  statement, hence it suffices to prove for  the case $X = B \times cL(1)$.
Since   the smooth structure  on $B \times cL(1)$   is a product smooth structure, it  is  not hard to reduce
 Proposition \ref{fin} to  the case  $B $ is a point, i.e.  $X = cL(1)$ is a cone  over   a stratified space  $L$.  For  the  case $L$ is a smooth  manifold, by Remark \ref{rsmooth}.2, we have  proved  the corresponding assertion  in \cite[Proposition 2.1]{LSV2010}.  The  proof  of \cite[Proposition 2.1]{LSV2010}  can be repeated word-by-word for  the case  $L$ is a stratified space, using the  last two  conditions of Definitions \ref{smooths};  so we omit its  proof.
\end{proof}

The following  Corollary   is an immediate  consequence  of the  existence of    partition of unity, see. e.g. \cite[Lemma 2.11]{LSV2010}.

\begin{corollary} \label{germ}   Smooth functions on $X$ separate points on $X$.
\end{corollary}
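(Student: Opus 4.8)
The plan is to deduce point-separation directly from Lemma \ref{sing}, which already produces, for any point $x_0 \in X$ and any $\eps$-neighborhood $U_\eps(x_0)$, a smooth function $f \in C^\infty(X)$ with $0 \le f \le 1$, $f(x_0) = 1$, and $f \equiv 0$ outside $U_\eps(x_0)$. Given two distinct points $x_0, x_1 \in X$, I would first use that $X$ is Hausdorff (built into Definition \ref{decom}) to choose $\eps > 0$ small enough that $x_1 \notin U_\eps(x_0)$; this is possible since the $\eps$-neighborhoods shrink to $\{x_0\}$. Applying Lemma \ref{sing} with this $\eps$ yields $f \in C^\infty(X)$ with $f(x_0) = 1$ and, since $x_1$ lies outside $U_\eps(x_0)$, $f(x_1) = 0$. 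Hence $f(x_0) \ne f(x_1)$, which is exactly the required separation.

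One small point to address is that the statement says ``separate points'', so I would phrase the conclusion for an arbitrary pair of distinct points rather than just producing one function; the argument above does this uniformly. I would also remark, if desired, that the same reasoning gives more: by Lemma \ref{loc} (or by taking suitable combinations of the functions from Lemma \ref{sing}) one can separate a point from a disjoint compact set, but this is not needed for the corollary as stated.

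There is essentially no obstacle here: the entire content has been front-loaded into Lemma \ref{sing}, and the corollary is a one-line consequence together with the Hausdorff property of $X$. If anything, the only thing to be careful about is that the neighborhood base $\{U_\eps(x_0)\}_\eps$ used in Lemma \ref{sing} is a genuine base at $x_0$ (so that $x_1$ can be excluded), which follows because the local charts $\phi_{x_0}\colon U_\eps(x_0) \to B(\eps)\times cL(\eps)$ are homeomorphisms onto neighborhoods that contract to $x_0$ as $\eps \to 0$.
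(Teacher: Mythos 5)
Your proposal is correct and is essentially identical to the paper's own proof: choose an $\eps$-neighborhood of one point excluding the other and apply Lemma \ref{sing} to get a function taking value $1$ at one point and $0$ at the other. The extra remarks about the Hausdorff property and the neighborhood base are fine but not needed beyond what the paper already assumes.
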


Next, we introduce  the notion of the cotangent bundle  and the notion of the Zariski tangent bundle  of a  stratified  space $X$, in the same way as we did in  \cite{LSV2010}, which are similar to
the notions introduced in \cite{SL1991}, \cite[B.1]{Pflaum2000}.
Note that the germs of smooth functions
$C^\infty _x (X)$ is  a local $\R$-algebra  with  the unique maximal ideal $\sm _x$ consisting of functions that vanish 
at $x$. Set $T^*_x (X) : = \sm_x / \sm_x ^ 2$.  
Since  the following exact sequence 
\begin{equation}
0 \to \sm_x \to C^\infty _x \stackrel{j}{\to} \R \to 0
\end{equation}
splits, where $j$ is the evaluation map, $j(f_x) = f_x (x)$ for any $f_x \in C^\infty  _x$, the space $T_x^*X$ can be identified with the space of K\"ahler differentials of $C^\infty _x(X)$. The K\"ahler  derivation $d : C^\infty _x (X) \to T^*_x X$ is defined as follows:
\begin{equation}
d (f_x): = (f_x -  j^{-1} (f_x (x))  +\sm_x^2,
\end{equation}
where $j^{-1}: \R \to  C^\infty_x $ is the left inverse of $j$, see  e.g. \cite[Chapter 10]{Matsumura1980}, or \cite[Proposition B.1.2] {Pflaum2000}.    We  call $T^*_x X$  {\it  the cotangent space} of
$X$ at $x$. Its dual space
$ T^Z_x X: = Hom ( T^*_xX, \R)$ is  called {\it the Zariski tangent  space} of $X$ at $x$.
The union $T^*X : =\cup _{x \in X} T^* _x X$ is called {\it the cotangent bundle} of $X$.  The union $T^Z X: =\cup _{x \in X} T^Z _x X$ is called {\it the   Zariski tangent bundle} of $X$.

Let us denote  by $\Om^1_x (X)$ the $C^\infty _x (X)$-module   $C^\infty_x (X)\otimes_\R\sm_x/\sm_x^2$. We call  $\Om^1_x (X)$
{\it the germs of 1-forms at $x$}.  
Set $\Om^k _x (X):= C^\infty _x(X) \otimes _\R \Lambda ^k(\sm_x/\sm_x^2)$.  Then $\oplus _k\Om ^k_x  (X)$ is an exterior algebra with the following
wedge  product
\begin{equation}
(f\otimes _\R dg_1 \wedge \cdots \wedge dg_k)\wedge (f'\otimes_\R dg_{k +1}\wedge \cdots \wedge dg_l): = (f\cdot f')\otimes _\R dg_1 \wedge \cdots \wedge dg_l,
\end{equation}
where $f, f ' \in C^\infty _x$ and $d g_i \in T_x ^*M$.  

Note that the K\"ahler derivation $d : C^\infty_x (X) := \Om ^ 0 _x (X) \to \Om ^1_x (X)$ extends to the unique derivation $d: \Om ^k_x (X) \to \Om ^{k+1} _x (X)$  satisfying the Leibniz property. Namely we set
\begin{eqnarray}
d(f\otimes 1) := 1 \otimes df,\nonumber\\
d(f\otimes \alpha \wedge  g \otimes \beta): = d(f\otimes \alpha) \wedge g\otimes \beta + (-1) ^{deg\, \alpha} f\otimes \alpha \wedge  d(g \otimes \beta).\nonumber
\end{eqnarray}

\begin{definition} \label{smoothf}  1. (cf. \cite[\S 2]{Mostow1979})   A section $\alpha :X \to \Lambda ^k T^*(X)$  is  called {\it a smooth differential $k$-form}, if 	for each $x \in X$ there exists  a neighborhood
$U(x) \subset X$  of $x$ such that $\alpha (x)$ can be represented as $\sum _{i_0i_1\cdots i_k} f_{i_0}df_{i_1}\wedge
\cdots \wedge df_{i_k}$ for  some $f_{i_0}, \cdots ,f_{i_k} \in  C^\infty (X)$.

2. A section $V : X \to\Lambda^k T^Z X$  will be called {\it a smooth  Zariski  $k$-vector field}, if  for any  $\alpha \in \Om^k (X)$  the value $V(\alpha)$ is  a smooth function
on $X$.
\end{definition}

Denote  by $\Om(X)= \oplus _k \Om^k(X)$ the space of all smooth differential forms   on $X$. We identify  the germ at $x$
  of 
a $k$-form $\sum _{i_0i_1\cdots i_k} f_{i_0}df_{i_1}\wedge
\cdots \wedge df_{i_k}$   with the element $\sum _{i_0i_1\cdots i_k} f_{i_0}\otimes df_{i_1}\wedge
\cdots \wedge df_{i_k} \in \Om ^k_x(X)$.
Clearly the K\"ahler derivation $d$  extends to a map, also denoted by $d$, that sends  $\Om(X)$ to $\Om(X)$.

Now we  set
$$\Om _u(X^{reg}) := \Om (X^{reg}, C^\infty_u (X^{reg})). $$
Remark \ref{rsmooth}.1  implies immediately
 
\begin{lemma}\label{injf} The kernel $i^* :\Om (X)\to \Om_u (X^{reg})$ is zero.  Roughly speaking, we can regard $\Om (X)$ as a subspace  in $\Om_u(X^{reg})$. 
\end{lemma}

\subsection{Examples  of smooth structures  on stratified spaces}

\begin{example}\label{ex1}  Assume that  $X$ is a   realization of a  compact polytope  in $\R^n$, i.e., $X$ is a stratified  space  such that  each  stratum  $S$  of dimension $k$ of $X$  is an open  disk in some   affine subspace of dimension $k$  in $\R^n$.  Then  $X$ has a natural  smooth structure induced from the standard  smooth structure  on  $\R^n$
 i.e. $SC^\infty (X) : = SC^\infty (\R^n)|_X$.  Indeed,  by  construction $C^\infty(X)$ is a germ-defined $C^\infty$-ring, hence  the  condition 1
in Definition \ref{smooths} is trivially satisfied.    Now let us prove the validity of the second  condition on  the product  smooth structure inductively on  the dimension  of $X$. Note that  the validity of the second condition is trivially satisfied, if $\dim X = 0$.  Since $X$ is a realization of  a polytope,
 the tubular neighborhood  of  any  point $x \in S^k$  in  $\R^n$  has the form $B_x \times c L(1)$, where $L(1)$  is the intersection  of $X$  with  the sphere $S^{n-k-1}$  of a small  enough radius   centred at $x$ on a hyperplane   through $x$ in $\R^n$ that is  orthogonally complement to  $S$, and $B_x$ is  an open ball around $x$ in $S$.
 By the dimension  induction assumption, $L(1)\subset  S^{n-k-1}$   has a natural smooth structure induced  from  the embedding $L(1) \to S^{n-k-1} \subset  \R^n$, which  satisfies  the conditions  of
 Definition \ref{smooths}, since  the  projection from a punctured sphere $S^{n-k-1}\setminus  \{ pt\}$  to $\R^{n-k-1}$, where $\{pt\} \not \in L(1)$,  sends  $L(1)$  to a   realization of  a polytope of lower dimension
 in $\R^{n-k-1}$, and this  projection  is a diffeomorphism  between $S^{n-k-1} \setminus  \{ pt\}$ and $\R^{n -k -1}$.   To study   the smooth structure  on $cL(1)$  we need the following
 \begin{lemma}\label{lem:prod}  Assume that $A \subset \R^n$ and $B\subset \R^m$. Then 
 the  sheaf $SC^\infty (\R^n \times \R^m)|_{A \times B}$  is  generated  by $\pi^*_1 (SC^\infty(\R^n) | _A)$ and $\pi^*_2 (SC^\infty(\R^m)|_B)$, where
 $\pi_1$ and $\pi_2$  are the projection of $\R^{n +m}$  onto $\R^n$ and $\R^m$ respectively.
 \end{lemma}
 \begin{proof} Lemma  \ref{lem:prod}  is  a  consequence  of the  simple  fact that  $SC^\infty (\R^n \times \R^m)$ is generated by  $SC^\infty(\R^n)$ and $SC^\infty(\R^m)$.
 \end{proof}
 
  Lemma \ref{lem:prod}  implies  that  the  induced  smooth  structure  on   the  product $B_x \times cL(1)$
  satisfies   the condition 2  of Definition  \ref{smooths}.   The  condition (3a)  of Definition \ref{smooths} trivially holds, and the condition  (3b) of Definition \ref{smooths}   also holds for the  smooth structure on the  cone $cL(1)\subset \R^{n-k}\subset \R^{n}$, using   partition of unity on $\R^{n-k}$.
   This   proves  that  the  induced  smooth structure  on  $X$ satisfies   all the conditions  of Definition \ref{smooths}.
\end{example}

\begin{example}\label{ex2} Assume that  $(X_i, C^\infty (X_i))$   are stratified  spaces  provided  with   smooth structures. Then it is easy to  verify that
$(\Pi X_i, \Pi (C^\infty (X_i))$  is a stratified
manifold  provided  with  a  smooth structure.
\end{example}

\begin{example}\label{ex3} Assume that we have a   continuous surjective map   $M \stackrel{\pi}{\to} X$
from  a  smooth manifold $M$  with corner   to a stratified space $X$  of depth 1 such that
for each stratum $S_i \subset  X$ the triple  $(\pi ^{-1} (S_i), \pi_i, S_i)$  is a differentiable fibration,
moreover for each $x \in X^{reg}$ the preimage $\pi ^{-1}(x)$ consists of a  single point.  
Clearly  $\pi$ induces a   stratified  space structure  on $M$.
The $\R$-subalgebra  $C^\infty (X) : = \{ f \in  C^0 (X)|\, \pi ^* f \in C^\infty (M)\}$  will be called {\it a resolvable smooth structure}.  We  are going to show that a resolvable  smooth structure satisfies  the conditions in  Definition \ref{smooths}.
First, $C^\infty (X)$ is  a germ-defined $C^\infty$-ring, since $C^\infty (M)$ possesses this property.  
Next,  the existence of a local smooth trivialization $\phi_x$  for each $x \in X$, which satisfies   the  last two conditions  of Definition \ref{smooths}  is a consequence of  the  existence of a differentiable
fibration $(\pi ^{-1} (S_i), \pi_i, S_i)$ and the fact  that $\pi$ induces  a  stratified space structure on $M$.
The space $M$ will be called {\it a resolution} of $X$.
\end{example}

In what follows we study some properties of  a resolvable smooth structure on a  stratified space of depth 1.

We say that  $C^\infty(M)$ is {\it locally smoothly contractible},  if for  any  $x\in M$ there exists  an open neighborhood $U(x) \ni x$ together
with a smooth homotopy $\sigma : U (x) \times  [0,1] \to  U(x)$ joining the   identity map with the  constant map $ U(x) \mapsto x$ \cite[\S 5]{Mostow1979}.

A $C^\infty$-ring $C^\infty (X)$ is called  {\it finitely generated}, if  there are finite elements $f_1, \cdots, f_k \in C^\infty (X)$ such that any $h \in C^\infty (X)$ can be written as $ h = G (f_1, \cdots, f_k)$, where
$G \in C^\infty (\R^k)$. 

\begin{lemma}\label{canres}  Every pseudomanifold $X$ with edges has a resolvable smooth structure, which is locally smoothly contractible.   
\end{lemma}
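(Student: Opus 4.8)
The plan is to produce the resolution and the smooth structure explicitly, then verify local smooth contractibility chart by chart. Recall that a pseudomanifold with edges $X$ comes by definition with a compact manifold-with-boundary $M$ whose boundary $\partial M$ fibers as $\pi:\partial M\to N$ with closed fiber $L$, and $X=M\cup_\pi N$ with the projection $\bar\pi:M\to X$. First I would take $M$ itself (a manifold with boundary, hence a manifold with corners) as the candidate resolution and $\bar\pi$ as the resolving map. I must check the hypotheses of Example \ref{ex1}.3: over the regular stratum $X^{reg}=X\setminus N$ the map $\bar\pi$ is a homeomorphism (indeed a diffeomorphism onto $M\setminus\partial M$), so preimages of regular points are singletons; over the singular stratum $N$ the triple $(\bar\pi^{-1}(N),\pi,N)=(\partial M,\pi,N)$ is precisely the given smooth locally trivial fibration. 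Hence $C^\infty(X):=\{f\in C^0(X)\mid \bar\pi^*f\in C^\infty(M)\}$ is a resolvable smooth structure, and by Example \ref{ex1}.3 it satisfies all the conditions of Definition \ref{smooths}; this already gives the first half of the statement.

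Next I would establish local smooth contractibility. Away from $N$ the space $X$ is an honest smooth manifold, so any point has a Euclidean (convex) neighborhood and the straight-line homotopy works; this homotopy is smooth in the manifold sense, hence smooth for $C^\infty(X)$ since $\bar\pi$ is a diffeomorphism there. The real content is at a point $x\in N$. Using the collar and the local triviality of $\pi$ exactly as in Example \ref{edge}.1, choose a chart $\phi_x:U(x)\to B\times cL(1)$ with $B$ an open ball in $\R^n$ (coordinates on $N$) and $cL(1)$ the open cone on the fiber. The plan is to contract $B\times cL(1)$ to the cone point of the fiber over $0$ by the obvious two-stage (or simultaneous) deformation: $\sigma\big((b,[z,t]),s\big)=\big((1-s)b,[z,(1-s)t]\big)$, which retracts $B$ linearly to $0$ and slides each cone fiber to its apex. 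One checks $\sigma$ is continuous, starts at the identity, ends at the constant map to the singular point, and stays inside $U(x)$.

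The main obstacle — and the step deserving care — is showing $\sigma$ is \emph{smooth} as a map $U(x)\times[0,1]\to U(x)$ for the product smooth structure $C^\infty(U(x)\times[0,1])$, i.e. that $\sigma^*$ pulls back $C^\infty(U(x))$ into the product ring. Transporting via $\phi_x$, this reduces to smoothness of the cone contraction on $cL(1)$ together with the linear contraction on $B$; since the product smooth structure on $B\times cL(1)$ is generated by the two factors (Definition \ref{product}), and smoothness is a germ-local and $C^\infty$-ring-closed condition, it suffices to treat each factor. For $B$ the map $(b,s)\mapsto(1-s)b$ is visibly smooth. For the cone factor I would verify that the map $cL(1)\times[0,1]\to cL(1)$, $([z,t],s)\mapsto[z,(1-s)t]$, is smooth for the given smooth structure on the pseudomanifold-with-isolated-conical-singularity $cL(1)$: on the regular part $L\times(0,1)$ it is smooth in the ordinary sense, and near the cone point one uses that defining functions of cones and the radial rescaling are admissible operations — this is the technical heart, and it is essentially the content of \cite[\S 5]{Mostow1979} on local smooth contractibility of cones, which I would invoke or reproduce. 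Combining the two factors via the product-ring structure, and patching the regular and singular cases by the germ-defined property of $C^\infty(X)$, completes the proof.
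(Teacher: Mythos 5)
Your construction of the resolution and of the resolvable smooth structure is exactly the paper's: take $M$ itself with $\bar\pi:M\to X$ and apply Example \ref{ex1}.3. That half is fine. The divergence, and the problem, is in the contractibility argument. You contract the local model $B\times cL(1)$ by the radial map $([z,t],s)\mapsto[z,(1-s)t]$ on the cone factor and propose to justify its smoothness at the cone point by invoking Mostow's \S 5. But the paper's own Remark \ref{eucl} states explicitly that the authors do \emph{not} know whether the radial rescaling $cL\times[0,1]\to cL$, $([x,t],\tau)\mapsto[x,t\cdot\tau]$, is smooth at the cone point for a general smooth structure on $cL$; Mostow's contractibility statement is tied to his particular smooth structure on cones and cannot simply be quoted for an arbitrary one. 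So the step you yourself identify as the ``technical heart'' is precisely the step that is not available off the shelf, and as written your proof has a gap there.

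The repair is already implicit in your own setup, and it is what the paper does: because the smooth structure is \emph{resolvable}, smoothness of a map into $X$ is checked by lifting to $M$. The paper takes a collar neighborhood $V(\partial M_i)$ of the boundary component over an edge $S_i$, uses the collar fibration to build an ordinary smooth retraction $\tilde F: I\times V(\partial M_i)\to V(\partial M_i)$ onto $\partial M_i$, observes that $\tilde F$ is the identity on $\partial M_i$ (hence constant on the fibers of $\bar\pi$, so it descends to a well-defined $F$ on $U(S_i)=\bar\pi(V(\partial M_i))$), and concludes that $F$ is smooth because for $f\in C^\infty(U(S_i))$ one has $(\mathrm{Id}\times\bar\pi)^*F^*f=\tilde F^*\bar\pi^*f\in C^\infty(I\times V(\partial M_i))$. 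In other words: do not prove smoothness of the cone contraction downstairs; lift it to $L\times[0,1)$ (equivalently to the collar), where it is the manifestly smooth map $(z,t,s)\mapsto(z,(1-s)t)$, and let the definition of the resolvable structure carry the conclusion back down. With that substitution your argument goes through (and your version even contracts all the way to a point, whereas the paper's $F$ only retracts onto the edge and leaves the final contraction within the smooth stratum $S_i$ implicit).
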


\begin{proof}  By definition (see Example \ref{edge}.1), there exist  a compact smooth manifold $M$ with boundary  $\p M$ and   a surjective  map $\bar \pi: M \to X$.  In Example \ref{ex1}.3  we have shown   that  such a $X$ has a resolvable smooth structure  $C^\infty (X) : = \{ f \in  C^0 (X)|\,\bar \pi ^* f \in C^\infty (M)\}$.  We will show that $C^\infty (X)$ is locally smoothly contractible.
Let $S_i$ be a singular stratum of $X$, and $\bar \pi ^{-1} (S_i)  = \p M _i \subset \p M$.
Let $V(\p M_i)$ be  a collar open neighborhood of $\p M_i$ in $M$. Then $U(S_i) : = \bar \pi ( V(\p M_i))$ is an open neighborhood of $S_i$ in $X$. Let us consider the following  commutative diagram

$$\xymatrix{I \times  V(\p M_i) \ar[d]^{(Id\times \bar\pi)}\ar[r]^{\tilde F} & V(\p M_i)\ar[d]^{\bar\pi}\\
I\times U(S_i) \ar[r]^{F}  & U(S_i)
}$$
where  $\tilde F$ is a  smooth  retraction
from $V(\p M_i)$ to $\p M_i$, constructed using the  fibration $[0,1) \to V(\p M_i) \to \p M_i$.  
We set
$$ F ( t, x) : = \bar \pi (\tilde  F(t, \bar \pi^{-1}(x) )).$$
Since $\tilde  F_{| \p M_i} = Id$, the map $F$ is well-defined.   Clearly $F$ is a smooth homotopy, since $\tilde F$ is a smooth homotopy.  This proves  Proposition \ref{canres}.
\end{proof}

\begin{proposition}\label{infg} A resolvable
smooth structure on a stratified space $X$  of depth 1 obtained from a  smooth manifold $M$ with corner  is  not finitely generated
as a $C^\infty$-ring, if there exists $x\in X$ such that $\dim \pi^{-1}(x)\ge 1$, where   $\pi :  M\to X$  is the  associated projection.
\end{proposition}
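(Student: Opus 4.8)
The plan is to derive a contradiction from finite generation by examining the cotangent space $T^*_x X$ at a point $x$ with $\dim \pi^{-1}(x) = d \ge 1$. If $C^\infty(X)$ were generated by $f_1, \dots, f_k$, then every germ in $C^\infty_x(X)$ would be of the form $G(f_1, \dots, f_k)$ with $G \in C^\infty(\R^k)$, and applying the K\"ahler derivation $d$ together with the chain rule (which holds for $C^\infty$-rings) would force $T^*_x X = \sm_x/\sm_x^2$ to be spanned by the finitely many classes $df_1, \dots, df_k$; in particular $\dim_\R T^*_x X \le k < \infty$. So it suffices to exhibit, at such a point $x$, infinitely many $\R$-linearly independent elements of $\sm_x/\sm_x^2$, equivalently infinitely many germs in $\sm_x$ whose differentials stay independent modulo $\sm_x^2$.

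First I would set up the local model. Near $x$, the resolution restricts to a differentiable fibration $\pi^{-1}(S) \to S$ over the stratum $S \ni x$ with compact fiber $F = \pi^{-1}(x)$ of dimension $d \ge 1$, and $M$ is a manifold with corners, so we may choose local coordinates on a neighborhood $\tilde U$ of a point $\tilde x \in F \subset M$ in which $\pi$ is (a restriction of) a linear projection $\R^{m} \to \R^{m-d}$, with $F$ given locally by the fiber coordinates $y = (y_1, \dots, y_d)$ and $x$ corresponding to $y = 0$; the base coordinates we call $s = (s_1, \dots, s_{m-d})$. A germ $h \in C^\infty_x(X)$ pulls back under $\pi$ to a germ $\tilde h = \pi^* h \in C^\infty(M)$ along the whole fiber $F$, which must be constant on $F$ (it is the pullback of a function defined on $X$, where $F$ is collapsed to the point $x$), i.e. $\tilde h$ is independent of $y$ near $\tilde x$. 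Conversely, by Example \ref{ex1}.3 and the definition of the resolvable structure, germs in $C^\infty_x(X)$ are exactly (represented by) the $\pi$-invariant germs in $C^\infty(M)$ along $F$.

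Now I produce the infinitely many independent elements. Pick countably many base functions: in the base coordinates $s$, the germs $s_1, s_1^2, s_1^3, \dots$ are $\R$-linearly independent modulo the square of the maximal ideal is \emph{false} — so instead I would use a point $x$ whose stratum $S$ has positive dimension and argue that $T^*_x X$ surjects onto $T^*_x S$, which is finite-dimensional, plus there are genuinely new covectors coming transverse to $S$. The cleaner route: since $\dim F \ge 1$, choose a smooth function $\beta \in C^\infty_0$ on a collar of $F$ in $M$ that is $\pi$-basic, together with a sequence of base bump functions, and show directly that the map $C^\infty_x(X) \to \prod_{j} \R$ recording partial derivatives in infinitely many independent normal directions of $\pi^{-1}(B)$ is onto; each such derivation kills $\sm_x^2$, so their joint kernel has infinite codimension, forcing $\sm_x/\sm_x^2$ to be infinite-dimensional. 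Concretely, using the $S^{k-1}$-fibration structure from Example \ref{edge}.4 and the trivialization $\phi_x: U(x) \to B_x \times c\tilde L(1)$, the link $\tilde L$ has positive dimension, so there are infinitely many independent smooth functions on $\tilde L$ pulled back via $\pi_2$; these give infinitely many independent elements of $C^\infty_x(X)$ modulo $\sm_x^2$ because they have linearly independent differentials along the link directions, which are genuine Zariski tangent directions by Remark \ref{rsmooth} and Remark \ref{injf}.

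The main obstacle is making the last step airtight: one must verify that the differentials of these link-pulled-back functions really are linearly independent in $\sm_x/\sm_x^2$ rather than collapsing — equivalently, that distinct "normal/link directions" at $x$ are not identified by the $C^\infty$-ring relations. The key tool is that $C^\infty(X)$ is germ-determined and partially invertible (Lemma \ref{main1}) and that $i^*: \Om(X) \to \Om(X^{reg})$ is injective (Remark \ref{injf}), so linear independence of the pulled-back differentials on $X^{reg}$ — where they live on an honest manifold and independence is classical — implies linear independence in $T^*_x X$ after a limiting argument as one approaches $x$ along $X^{reg}$. I would therefore organize the proof as: (1) reduce finite generation to $\dim_\R T^*_x X < \infty$ via the chain rule; (2) identify $C^\infty_x(X)$ with $\pi$-basic germs via the resolvable structure; (3) construct infinitely many candidate germs from functions on the positive-dimensional link/fiber; (4) prove their $d$-images are independent in $\sm_x/\sm_x^2$ using injectivity of restriction to $X^{reg}$; conclude.
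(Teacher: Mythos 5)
Your opening reduction (finite generation $\Rightarrow$ $C^\infty(X)=C^\infty(\R^n)/I$ $\Rightarrow$ $\dim T^*_xX<\infty$) and your identification of germs of $C^\infty_x(X)$ with $\pi$-basic germs on $M$ both agree with the paper. But the core of the argument --- actually exhibiting infinitely many elements of $\sm_x$ that stay independent modulo $\sm_x^2$ --- is where your proposal has a genuine gap. The candidates you offer either fail (powers of a base function, as you yourself note, die in $\sm_x^2$) or are not elements of $C^\infty_x(X)$ at all: a function pulled back from the link $L$ is not constant on the collapsed fiber and is not even continuous at the singular point unless it is constant, so "link-pulled-back functions" do not give germs in the resolvable structure. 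Worse, your proposed verification mechanism in step (4) cannot work in principle: restriction $i^*$ to $X^{reg}$ lands in $\Om^1(X^{reg})$, and at any regular point the cotangent space has dimension bounded by $\dim X^{reg}$, so no limiting argument along $X^{reg}$ can certify more than finitely many independent classes. Infinite-dimensionality of $T^*_xX$ is a phenomenon visible only in the ring $C^\infty_x(X)$ at the singular point, namely in which products do or do not lie in $\sm_x^2$; it is not detected pointwise on the regular part.

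The idea your proposal is missing is the paper's choice of test functions. Write local coordinates $(\tilde x,\tilde y,\tilde z)$ on $M$ near a point of the fiber $F=\pi^{-1}(x)$, where $\tilde z$ are coordinates along the stratum $S$, $\tilde y$ are coordinates along the (positive-dimensional) fiber, and $\tilde x$ are coordinates vanishing on $\pi^{-1}(S)$. By the Hadamard-type Lemma \ref{local}, the $\pi$-basic germs are exactly those of the form $\sum_i\tilde x^ig_i(\tilde x,\tilde y,\tilde z)+c(\tilde z)$. Now take the sequence $\tilde x^1\tilde y^1,\ \tilde x^1(\tilde y^1)^2,\ \dots$ . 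Each term vanishes on $\pi^{-1}(S)$, hence is fiber-constant and descends to a germ in $\sm_x\subset C^\infty_x(X)$; yet none of them lies in $\sm_x^2$ and no finite subfamily spans the rest modulo $\sm_x^2$, precisely because $\tilde y^1$ itself is \emph{not} fiber-constant and therefore is not available as a factor in $C^\infty_x(X)$. (Concretely, every element of $\sm_x^2$ is a sum of terms either quadratic in $\tilde x$ or carrying a factor $c(\tilde z)-c(0)$, so comparing the coefficient of $\tilde x^1$ at $\tilde x=0,\ \tilde z=0$ of a putative relation yields a linear dependence among the monomials $(\tilde y^1)^m$, a contradiction.) This is the mechanism that turns $\dim\pi^{-1}(x)\ge1$ into $\dim T^*_xX=\infty$, and it is exactly what your steps (3)--(4) would need to supply.
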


\begin{proof}
Assume the opposite i.e.  $C^\infty (X)$ is  generated by $g_1, \cdots, g_n\in C^\infty (X)$. Then  $G:= (g_1, \cdots , g_n)$ defines a 
smooth embedding $X \to \R^n$. Hence   $C^\infty(X) = C^\infty (\R^n) /I$, where $I$ is an ideal of $C^\infty (\R^n)$   of smooth functions on $\R^n$ vanishing on $G(X)$ \cite[p. 21, Proposition 1.5]{MR1991}.  In particular, 
the  cotangent space $T^*_x X$ is a finite  dimensional linear space for all $x \in X$.  We will show that this assertion leads to a contradiction.
 
Let $S$ be a stratum  of $X$  such that $\dim (\pi^{-1} ( S)) \ge \dim S + 1$.  
Let $ x\in S$  and $U(x)$  a  small open neighborhood of $x$ in $X$. 
Let $f \in C^\infty (U(x))$, equivalently $\pi^*(f) \in C^\infty(\pi ^{-1} (U(x))$. Let $\chi : \pi ^{-1}( U(x) )\to \R^p_+ \times
\R^{n-p}\subset \R^n$ be a coordinate map on  $\pi ^{-1} (U(x))\subset M$. By definition of manifolds with corner, we have $(\chi^{-1})^* \pi^*(f) \in C^\infty (\tilde U)$ for some open set $\tilde U \subset \R^n$ containing $\chi (\pi ^{-1} (U(x)))$.    Denote by $\tilde S$ the  preimage  $\chi \circ \pi^{-1}( S\cap U(x))$, which is a submanifold of $\tilde U(x)$. Let us denote the restriction of $ \pi \circ \chi ^{-1}$ to $\tilde S$   by $\tilde\pi $. Then   the triple $(\tilde S, \tilde \pi, S\cap U)$ is a smooth fibration, whose fiber $\tilde \pi^{-1} (y)$ is   a smooth manifold of dimension at least 1.  
We note that  $(\chi ^{-1})^*\pi ^*(f)$ belongs to the subalgebra $C^\infty ( \tilde U,\tilde S,\tilde \pi)$ consisting of
smooth functions on $\tilde U$ that are constant along fiber  $\tilde\pi ^{-1}(y)$  for all $y\in S\cap U(x)$.
Since   the depth  of $X$  is 1, $C^\infty (\tilde U,\tilde S,\tilde \pi)$ is identified with the set of  smooth  functions on $U$. 

Shrinking $U(x)$    we can assume that  $\tilde S = \tilde U \cap \R^k$  and $\tilde \pi:\tilde S \to S \cap U$ is the restriction of a linear projection  $\bar \pi:\R^k  \to \R^l$, where $ l = \dim S$, $k = \dim \tilde S$, and $S \cap U = \R ^l \cap U$.  Here
we assume that $U$ is an open set in $\R^n$.  Let $\R^{n-k}$ with coordinate $\tilde x=(\tilde x^1, \cdots, \tilde x^{n-k})$ be  a complement to $\R^k$ in $\R^n \supset \tilde U$, and let  $\R^{k-l}\subset \R^k$  with coordinate $\tilde y=(\tilde y^1, \cdots, \tilde y^{k-l})$ be   the set $\bar \pi^{-1}(0)$. We  also equip the subspace $\R^l$   with coordinate $\tilde z = ( \tilde z^1, \cdots, \tilde z ^l)$.  
The condition $\dim \pi^{-1} (x) \ge 1$ in Proposition \ref{canres} is equivalent to the equality $k-l \ge 1$; in other words, $y$ is an essential variable.   Furthermore, a point $\tilde s \in \tilde S\subset \R^n$  has  (local) coordinates with $\tilde x =0$.

\begin{lemma}\label{local}  A function  $g\in  C^\infty (\tilde U)$  belongs to
$ C^\infty (\tilde U,\tilde S, \tilde \pi)$ if and only if $g$ has  the form
$$ g(\tilde x^1, \cdots, \tilde x^{n-k}, \tilde y,\tilde z) = \tilde x^1 g_1(\tilde x,\tilde y,\tilde z) + \cdots  + \tilde x^{n-k} g_{n-k}(\tilde x, \tilde y,\tilde z) + c(\tilde z), $$
where $g_i \in  C^\infty ( \tilde U)$  and $c(\tilde z)$ is a smooth function on $U$  depending only on variable  $\tilde z$.
\end{lemma}

\begin{proof}  We write for $g\in C^\infty (\tilde U,\tilde S, \tilde \pi)$
$$ g(\tilde x,\tilde y,\tilde z)  - g(0,\tilde y, \tilde z) = \int_0 ^ 1 {dg(t\tilde x, \tilde y,\tilde z)\over dt} dt= \int _0 ^ 1\sum_{i=1}^{n-k} { \p g(t\tilde x^1, \cdots, t\tilde x^{n-k},\tilde y,\tilde z)\over \p(t\tilde x^i)}\tilde x_i \, dt. $$
Setting
$$g_i := \int _0^1{ \p g(t\tilde x^1, \cdots, t\tilde x^{n-k}, \tilde y,\tilde z)\over \p (t\tilde x^i)}\,dt, $$
 we obtain  $g(\tilde x,\tilde y,\tilde z) = \sum_{i =1}^{n-k} \tilde x^i g_i (\tilde x,\tilde y,\tilde z) +  g(0,\tilde y,\tilde z)$.  Since $g(0,\tilde y, \tilde z)$ depends only on  $\tilde z$, we obtain  the ``only if" 
 part of Lemma \ref{local} immediately.  The ``if" part   is trivial. This proves Lemma \ref{local}.
\end{proof}

Now let us complete the proof of Proposition \ref{infg}.  Take a point $s\in S$  and a point $\tilde s \in \tilde \pi ^{-1} (s)$  such that $\tilde x (\tilde s) = \tilde y (\tilde s) = \tilde z(\tilde s) = 0$. Since $X$  has depth 1,  Lemma \ref{local}  implies that the maximal ideal $\sm _s$  is a linear space generated by  functions of the form $\tilde x^ig_{i, \alpha}(\tilde x,\tilde y,\tilde z), i = \overline{1,n-k}$.
Let us consider the sequence $S: = \{ \tilde x^1 \tilde y^1, \cdots, \tilde x^1(\tilde y^1)^m\in \sm _s\}$, $ m \to \infty$.  If  $\dim T^*_s X = \dim \sm_s /\sm_s^2 = n$,  there exists a   subsequence
$\tilde x^1(\tilde y^1)^{k_1}, \cdots, \tilde x^1 (\tilde y^1) ^{k_n}$  such that  $\tilde x^1(\tilde y^1) ^ m$ is a linear combination of $\tilde x^1(\tilde y^1)^{k_j}$  modulo $\sm_s ^2$ for any $m$, which is impossible. This completes the proof of Proposition \ref{infg}.
\end{proof}

\begin{remark} Proposition \ref{infg} partially answers the question 2 we posed in \cite[\S 5]{LSV2010}.
  We observe that there are
many  quotient smooth structures which are finitely generated, i.e.  a quotient   by a  smooth group action.  In this case
the dimension of the fiber over singular strata (e.g. the dimension of a singular orbit) is smaller than or equal to the dimension of the  generic fiber (the dimension of a generic orbit, respectively).
\end{remark}


\section{Symplectic  stratified spaces and  compatible  smooth structures}\label{s4}

In this chapter we introduce the notion of a  stratified symplectic space $(X,\om)$  (Definition \ref{stsym}), which is close to that  introduced by Sjamaar and Lerman \cite{SL1991} (Remark \ref{compsl}). We also introduce the notion of a  weakly symplectic smooth  structure 
 and the notion  of a Poisson smooth structure on $(X,\om)$  (Definition \ref{spos}).  We give examples  of weakly  symplectic smooth structures  and Poisson smooth structures (Propositions \ref{quote}, \ref{pois1}, Examples \ref{nil}.1-3).  We   prove the existence
and uniqueness of a Hamiltonian flow    associated with a smooth function $H$  on a  symplectic stratified space  $X$, which is
equipped with a  Poisson smooth structure  (Theorem \ref{ham}).  We  compare  our result with a
result by Sjamaar and Lerman  in \cite[\S 3]{SL1991} (Remark \ref{hsl}). We prove that the Brylinski-Poisson homology of a symplectic stratified space  $X$ provided with a  Poisson  weakly symplectic smooth structure  is isomorphic to the  de Rham cohomology of  $X$, if the regular strata   of $X$ have the same dimension  (Theorem \ref{hodge}.)  
 Then we show  that, under  a mild condition,  a   stratified  symplectic  space $(X, \om)$  provided  with a Poisson  weakly symplectic  smooth structure $C^\infty (X)$  enjoys   many  nice properties   related   to the existence of a Lefschetz decomposition (Lemma \ref{stable}, Proposition \ref{hlv}, Theorem \ref{harm}).

\subsection{Symplectic stratified  spaces}
\begin{definition} \label{stsym}  A stratified space $X$ is called {\it symplectic}, if every stratum  $S_i$ is provided with a symplectic form $\om_i$. The collection $\om:=\{\om_i\}$ is called {\it a stratified symplectic form}, or  simply {\it a symplectic form}, if no  misunderstanding can occur.   
\end{definition}

\begin{remark} \label{compsl} Definition \ref{stsym}  coincides with  the first  topological  condition   in \cite[Definition 1.12.(i)]{SL1991} of a   symplectic   stratified space $X$ introduced by Sjamaar and Lerman. (The other  conditions \cite[Definitions 1.12.(ii), 1.12.(iii)]{SL1991}  require  the existence of a compatible smooth structure   on $X$, which is also called by other authors \cite{Hueb2004} a stratified  symplectic Poisson algebra). Thus any
 stratified symplectic space in Sjamaar's and Lerman's definition is  a stratified symplectic space  in our definition.  
\end{remark}

\subsection{Weakly symplectic smooth structures  and Poisson smooth structures}
On each  symplectic stratum $(S_i, \om_i)$ we define the bivector $G_{\om_i}$ to be the section of the bundle $\Lambda ^2 TS_i$ such that $ G_{\om_i}(x) = \p y_1 \wedge \p x_1 + \cdots  + \p y_n \wedge \p x_n$
if $\om_i(x) = \sum_{j=1}^n dx^j \wedge dy^j$ \cite[\S 1.1]{Brylinski1988}. If we regard $\om_i$ as  an element in $End (TS_i, T^*S_i)$ and $G_{\om_i}$ as an element in $End (T^*S_i, TS_i)$, then $G_{\om_i}$ is the inverse of $\om_i$. The bi-vector $G_{\om_i}$ defines  a Poisson bracket on $C^\infty (S_i)$ by setting $\{f, g\}_{\om_i} := G_{\om_i} (df\wedge dg)$.

\begin{definition}\label{spos}  Let $(X, \om)$ be a symplectic stratified space and $C^\infty (X)$ be a smooth structure on $X$.  

1. A smooth  structure $C^\infty (X)$  is  said  to be  {\it weakly symplectic}, if there is a smooth 2-form
$\tilde \om \in \Om ^2 (X)$ such that   the restriction of $\tilde \om$ to each stratum $S_i$ coincides with $\om_i$.
In this case we also say that $\tilde \om$ is {\it  compatible  with $C^\infty (X)$}. 

2.  A smooth structure $C^\infty (X)$ is called {\it Poisson}, if there is a Poisson structure $\{ , \}_\om$ on  $C^\infty (X)  $   such that  $(\{f, g\} _{\om})_{| S_i} =
\{f_{|S_i}, g_{|S_i}\} _{\om_i}$  for any stratum $S_i\subset X$.
\end{definition}

\begin{remark}\label{pois0}  1. Lemma \ref{injf}  implies  that there exists  at most one 2-form $\tilde \om \in \Om^2 (X)$  which is
compatible with a given smooth structure $C^\infty (X)$.

2. We claim that the condition 2 in Definition  \ref{spos} is equivalent  to the existence of  a smooth Zariski bi-vector field $\tilde G_\om \in \Gamma(\Lambda ^2 T^Z (X))$  such that 
\begin{equation}
\tilde G_\om(\alpha) _{| S_i}  = G_{\om_i} ( \alpha_{ |S_i} )\label{biv0}
\end{equation}
for any stratum $S_i \subset X$.
Indeed, the
existence  a section $\tilde G_\om$  satisfying (\ref{biv0})  defines  a Poisson structure on $C^\infty (X)$ by setting
$\{f, g\}  (x) : = \tilde G_\om (df \wedge dg)(x)$. Conversely, assume that there is a Poisson structure $\{, \}_\om$
on $C^\infty (X)$ whose restriction to  each stratum $S_i$ coincides with  the given Poisson  structure on $S_i$. We claim
the bi-vector $\tilde  G_{\om_i}$ is a smooth Zariski bi-vector field. Since the  space of
smooth differential forms is germ-defined,  it suffices to show  the above claim locally. Note that on some neighborhood $U$ we  can write $\Om^2 (X) \ni\alpha = \sum _if_i dg_i \wedge dh_i$, where $f_i, g_i, h_i \in C^\infty (U)$.  Since the smooth structure is Poisson, we get  
\begin{equation}
\tilde G_\om (\alpha) =  \sum _i\tilde G_\om (f_i dg_i \wedge dh_i) = \sum_i f_i \{ g_i, h_i\} \in C^\infty (U). \label{biv1}
\end{equation}
This proves our claim. 

3. The condition 2  of Definition \ref{spos} agrees with  the condition (iii)  in Definition 1.12 of \cite{SL1991}  by Sjamaar and Lerman  of a stratified symplectic Poisson
algebra. It also agrees  with our Definition of a Poisson smooth structure  on  a  conical symplectic pseudomanifold  in \cite[\S 4]{LSV2010}.

\end{remark}

Now we are  going to  consider  important examples  of weakly symplectic  smooth structures and  Poisson  smooth structures.
\begin{example}\label{quot}  We assume that    a compact Lie group $G$ 
acts  on a  connected symplectic manifold $(M,\om)$  with  proper moment map $J : M \to \g ^*$.   Let $Z = J^{-1} (0)$.
The quotient space $M_0 = Z /G$ is called a  symplectic reduction of $M$.  If  $0$ is  a singular value of  $J$ then $Z$ is not a manifold and $M_0$ is  called {\it a singular symplectic reduction}.  It is known that $M_0$ is a stratified
symplectic space in Sjamaar's and Lerman's definition  \cite{SL1991},  and hence in our definition,  see Remark \ref{compsl}. Let us recall   the  description of $M_0$ by Sjamaar and Lerman.  For a subgroup  $H$ of $G$ denote by $M_{(H)}$ the set of
all points whose stabilizer is conjugate  to H, the stratum  of $M$ of orbit type $(H)$.

\begin{lemma}\label{sl1}\cite[Theorem 2.1]{SL1991} Let $(M, \om)$ be a Hamiltonian  $G$-space with moment map
$J : M \to \g ^*$.  The intersection  of the stratum $M_{(H)}$ of orbit type  $(H)$ with the zero level set $Z$  of the moment map
is a manifold, and the orbit space
$$ (M_0)_{(H)} =  (M_{(H)} \cap Z) /G$$
has a natural  symplectic  structure $(\om_0)_{(H)}$ whose  pullback to $Z_{(H)} : =
M_{(H)} \cap Z$ coincides with the restriction to $Z_{(H)}$ of the  symplectic form $\om$  on $M$.  Consequently the stratification
of $M$ by orbit types induces a decomposition of the reduced space $M_0 = Z/G$ into a disjoint union  of symplectic manifolds
$M_0 = \cup_{H \subset G } (M_0)_{(H)}$.
\end{lemma}

Since $J$ is proper, by Theorem 5.9 in \cite{SL1991} the  regular part $M_0^{reg}$ is connected. Sjamaar and Lerman also  defined 
a ``canonical" smooth structure  on  $ M_0$    as follows.  
Set $ C^\infty (M_0)_{can}: =  C^\infty  (M)^G /I ^ G$, where $I^G$  is the ideal  of $G$-invariant  functions vanishing on $Z$ \cite[Example 1.11]{SL1991}.  We will show that $C^\infty (M_0) _{can}$   is also a smooth structure in the sense of
 Definition \ref{smooths}.
Denote by $\pi$ the natural projection $ Z \to  Z/G$.  Set  $C^\infty (Z):= C^\infty (M) |_Z$. 
 Since $Z$ is closed, $C^\infty (Z) = C^\infty (M) / I_Z$, where $I_Z$ is the  ideal  of smooth functions on $M$ vanishing
on $Z$.

We claim that the space  $C^\infty  (Z) ^ G$ of $G$-invariant smooth functions on $Z$ can be identified with the   space  $ C^\infty (M_0)_{can} =  C^\infty  (M)^G /I ^ G$.   Clearly  $C^\infty (M) ^G/I^G$  is a  subspace of  $G$-invariant smooth functions on $Z$. On the other hand, any smooth function $f$ on $M$ can be modified to a $G$-invariant smooth  function $f_G\in C^\infty (M)$ by setting
$$f_G (x): = \int _G f(g\cdot x) \mu _g$$
for  a $G$-invariant   measure $\mu _g$ on $G$    normalized by  the condition $vol(G) = 1$.
So if $ g  \in C^\infty(Z)^G$, then $g$ is the restriction of  a $G$-invariant function   on $M$.
In other words, we have an injective map  $C^\infty  (Z) ^ G \to  C^\infty  (M)^G /I ^ G$.  Hence  follows the identity
$C^\infty  (Z) ^ G =   C^\infty (M_0)_{can}$.  It  follows that $C^\infty (M_0)_{can}$ is the quotient   of the smooth
structure obtained  from $C^\infty (Z)$ via the projection $\pi: Z \to  M_0$.  In particular, $C^\infty (M_0)_{can}$ is
a  germ-defined  $C^\infty$-ring, since $C^\infty (Z)$ is a germ-defined $C^\infty$-ring. 

\begin{proposition}\label{lem:smoothsl} $C^\infty (M_0) _{can}$ is a smooth structure  in the sense of Definition \ref{smooths}.
\end{proposition}
\begin{proof} Note that   $C^\infty (M_0) _{can}$ satisfies the first  condition in Definition
\ref{smooths}.    
To  show that  $C^\infty (M_0) _{can}$ satisfies the other conditions in Definition
\ref{smooths}, we use  Theorem 6.7  in \cite{SL1991} which  asserts that  there  is a proper  smooth  embedding
$(M_0,C^\infty (M_0) _{can}) \stackrel{i}{ \to} (\R^n, C^\infty (\R^n))$  such that  the image  $i (M_0)$ is  a stratified  Whitney  subspace $X$ of $\R^n$.
In particular $C^\infty (M_0)_{can} = i^* (C^\infty (\R^n)).$  

 We describe   a neighborhood   of a point $p\in X$ in $\R^n$  following \cite[p. 410]{SL1991}.
Let $S$ denote the stratum of $X$ that contains $p$.  Let $N'$ be a submanifold  in $\R^n$ that is transversal to each stratum  of $X$, intersects  $S$ in the single point $p$
and satisfies $\dim N' + \dim S = n$.  Let $B_\delta(p)\subset \R^n$ denote the ball of radius $\delta$. By Whitney's condition B, if $\delta$ is sufficiently small, then the sphere $\p B_\delta(p)$  will be transversal to  to each stratum in $X\cap N'$. Fix such a $\delta >0$. Next we consider the {\it normal slice} $N(p):=  N' \cap X \cap B_\delta(p)$  and {\it the link} $L(p): =N' \cap X \cap \p B_\delta(p)$ of the stratum $S$  at the point $p$.  These spaces are canonical Whitney stratified spaces, since they are transversal intersections
of Whitney stratified spaces. 
 Furthermore, $S$ has an open  neighborhood  $T_S$ in $X$ with local trivial  fibration
$\pi: T_S \to S$ such that  $\pi^{-1} (p) $ is homeomorphic to   $cL(p)$. Using  Lemma \ref{lem:prod}, this  implies that the induced  smooth structure
on $X$  satisfies the condition 2 of Definition \ref{smooths}.   The last two conditions (3a)  and (3b)  of Definition \ref{smooths}  also hold   for $X$, since  locally we have the same  description of $X$ as  that in Example \ref{ex1}.
This completes the  proof of  Proposition \ref{lem:smoothsl}. (We also refer  the reader to a more explicit, algebraic  description of the local structure  of $C^\infty(M_0)_{can}$  given in  Theorem 5.1 \cite{SL1991}  and its  proof).
\end{proof}

\begin{proposition}\label{quote}The  smooth structure  $ C^\infty (M_0)_{can}$   is both  weakly symplectic  and Poisson.
\end{proposition}
\begin{proof} 
We observe that $C^\infty (M_0)_{can}$ is   weakly symplectic,  since  by Proposition \ref{sl1} the pull back $\pi ^{*} (\om_0)$ is equal to  the restriction of the symplectic form $\om$ to $Z$. Furthermore, the Poisson property  of $C^\infty (M_0)_{can}$    follows  from \cite[Proposition 3.1]{SL1991}, where they showed that  $C^\infty (M_0)_{can}$  is closed under  the Poisson bracket.
This completes  the proof  of Proposition \ref{quote}.
\end{proof}
\end{example}

Let us consider  another  important  class  of  stratified   symplectic spaces, which are the closure of   nilpotent orbits in a complex semisimple Lie algebra
$\g$. This class has been examined by Panyshev \cite{Panyushev1991}, Huebschmann
\cite{Hueb2004}, Fu \cite{Fu2003}  and many other  under  different   perspectives.

\begin{example}\label{nil}  For $x\in \g$  let $x = x_s + x_n$  be the Jordan decomposition of $x$, where $x_n \not = 0$ is a nilpotent element, 
$x_s $ is a semisimple and $[x_s,x_n]=0$. Denote by $G$ the adjoint group of
$\g$  and by $\Zz_G(x_s)$ the centralizer of $x_s$ in $G$.   The adjoint orbit $G(x)$  is  a fibration over $G(x_s)$ whose fiber over $x_s$ is the $\Zz_{G} (x_s)$-orbit of $x_n$.  Since $G(x_s)$ is a closed orbit,  
a neighborhood $U$ of a point  $x\in \overline{G(x)}$ is isomorphic to   the product  $B \times \overline{\Zz_{G}} (x_s)\cdot x_n$, where
$B$ is   an open neighborhood  of $x_s$ in $G(x_s)$.  It is known that the closure $\overline{\Zz_{G} (x_s) \cdot x_n}$ is a finite union of  ${\Zz_{G}}(x_s)$-orbits  of nilpotent elements in the Lie subalgebra $\Zz_\g (x_s)$ \cite[chapter 6]{C-M1993}, so the closure $\overline{G(x)}$ is a  finite union of  adjoint orbits in $\g$ provided with the Kostant-Kirillov  symplectic structure.  Thus   $\overline{G(x)}$ is a decomposed space, whose strata are symplectic manifolds.  Moreover $\overline{G(x)}^{reg} = G(x)$ is  connected. 

1. Now assume that $x_n$ is a minimal nilpotent element
in $\Zz_{\g} (x_s)$. Then
$\overline{G(x)}$ is a stratified symplectic space of depth 1, since  $\overline{\Zz_{G} (x_s) \cdot x_n} = \Zz_{G}(x_s) \cdot x_n \cup \{0\}$, \cite[\S 4.3]{C-M1993}. The embedding
$\overline{G(x)} \to \g$ provides $\overline{G(x)}$ with a natural  finitely generated  $C^\infty$-ring  
$$C^\infty _1 (\overline{G(x)}):= \{f \in C^0(\overline{G(x)})|\, f = \tilde f_{|\overline{G(x)}}\text { for some } \tilde f \in C^\infty (\g)\}.$$ 
Clearly, $C^\infty _1 (\overline{G(x)})$  satisfies  the  first condition of Definition \ref{smooths}. The last two conditions in Definition \ref{smooths} also hold, since $\overline {G(x)}$ is a fibration over $G(x_s)$  whose fiber is  the cone
$\overline{\Zz_{G(x_s) }(x_n)}$  containing  the origin $\{0\} \in \g$.  Thus $C^\infty _1 ( \overline {G(x)})$  is a smooth structure  according to Definition \ref{smooths}. 

The smooth structure $C^\infty_1 (\overline{G(x)})$ is  Poisson that is inherited from the Poisson structure on  $\g$.
It is  also  weakly symplectic, since  the symplectic  form on $\overline{G(x)}$ is the restriction of the  smooth 2-form $\om _x (v, w) = \la x, [v, w]\ra$ on $\g$.  

2. We still assume that $x_n$ is minimal in $\Zz_{\g} (x_s)$.  In \cite[Lemma 2]{Panyushev1991} Panyushev showed that  $\overline{G(x)}$ possesses an algebraic (Springer's) resolution of the
singularity at $\{0\} \in \overline {G(x)} \subset \g$. We will show that this  resolution brings a  resolvable smooth structure $C^\infty _2 (\overline{G(x)})$.  First we recall the construction  in \cite{Panyushev1991}. 
Let 
\begin{itemize}
\item $h$ be a characteristic of  $x_n$ (i.e. $h\in \Zz_{\g} (x_s)$ is a semisimple element and $(h, x_n, y_n)\subset \Zz_{\g} (x_s)$ is an $\ssl_2$-triple);
\item $ \Zz_{\g} (x_s) (i): = \{  s\in \Zz_{\g} (x_s) | [h, s] = i s\}$;
\item $\n _2 (x_s):= \oplus _{i \ge 2} \Zz_{\g} (x_s)(i)$;
\item $P(x_s)$ denote a parabolic subgroup with the Lie algebra $lP(x_s) := \oplus_{i \ge 0}  \Zz_{\g} (x_s) (i)$ 
\item  $N_-$ - the connected  Lie subgroup of $\Zz_G (x_s)$ with Lie algebra \\
$lN_-(x_s) := \oplus _{i < 0}  \Zz_{\g} (x_s) (i).$ 
\end{itemize}
It is known that $\overline {P(x_s) \cdot x_n } = \n _2(x_s)$ and $\Zz_{\Zz_{G(x_s)}} (x_n) \subset P(x_s)$. Hence  there exists a natural map  
$$\tau : \Zz_{G}(x_s) * _{P(x_s)} \n_2(x_s) \to \overline{\Zz_{G}(x_s)\cdot x_n},\, \hspace{1cm} g* n \mapsto  gn,$$
 which is a resolution of the singularity 
of the cone $\overline{\Zz_{G}(x_s)\cdot x_n}$.   The resolution of  the $\overline {G(x)}$   is obtained by  considering
the fibration  $F$ over the orbit $G(x_s)$ whose fiber  over $x_s$ is  $\Zz_{G}(x_s) * _{P(x_s)} \n_2(x_s)$.   The map $\tau$  extends to a  map $\tilde \tau : F \to  \overline{G(x)}$ as follows. Denote  by $(x_s, y)$ the point  in the  fiber  over  $x_s\in  G(x_s)$ in  $F$ that is defined by   $y \in \Zz_{G}(x_s) * _{P(x_s)} \n_2(x_s)$.  Then
we set $ \tilde  \tau (x_s , y  ) : = \tau _{x_s} ( y).$

The resolvable smooth structure $C^\infty _2 (\overline{G(x)})$ is defined by $\tilde \tau$  as in Example \ref{ex1}.3.
    By Lemma \ref{canres} $C^\infty_2(\overline{G(x)})$ is locally smoothly contractible.  
 
3.  In addition, now  we assume that $x_s = 0$,  so $\Zz_G(x_s) = G$,  $P(x_s) = P$  and $\n_2(x_s) = \n_2$. In this case it  has been shown in \cite{Panyushev1991} that  $\tau ^* (\om)$ is a  smooth  2-form on  $G*_P \n_2$. It follows that $C^\infty_2 (\overline{G(x)})$ 
is   weakly symplectic. Panyushev also showed that $\tau ^* (\om)$ is symplectic  
if and only if $x$ is even. (We refer the reader to \cite{C-M1993} and \cite{FU2006} for   a detailed description of nilpotent orbits.)  
\end{example}
\begin{lemma}\label{lem:pois1}  Assume that $X$ is a stratified symplectic space  with isolated conical singularities and $(\tilde X, \tilde \om, \pi : \tilde X \to X)$  a
smooth resolution of $X$  such that $\tilde \om$ is a symplectic form  on $\tilde X$  and $\pi ^* (\om_{| X^{reg}}) 
= \tilde \om _{|\pi ^{-1} ( X^{reg})}$. If  for  each singular point $x \in  X$ the preimage $\pi ^{-1} (x)$ is a coisotropic  submanifold  in $\tilde X$, then
the  obtained resolvable smooth structure $C^\infty (X)$ is Poisson.
\end{lemma}

\begin{proof}  We define   a Poisson bracket  on $C^\infty (X)$ by  setting $\{ g,  f \}_\om (x)  : = \{ \pi^{*} g, \pi^{*} f\}_{\tilde \om}(\tilde x)$, for $\tilde x \in \pi ^{-1} (x)$.
We will show that this definition does not depend on the choice of a particular $\tilde x$.   By definition $\{ \pi^{*} g, \pi^{*} f\} _{ \tilde \om} (\tilde x): = G_{\tilde \om }(d{\pi^{*} g} , d{ \pi^{*} f})({\tilde x})$. Since $\pi^{*} f$ and $\pi^{*} g$ are constant along   the   coisotropic  submanifold  $\pi^{-1}(x)$, we get $G_{\tilde \om} (d{\pi^{*} g} , d{ \pi^{*} f})({\tilde x})=0$. This proves Lemma \ref{lem:pois1}.
\end{proof}

It has been showed in  \cite[\S 2]{Beauville2000}, \cite{Panyushev1991}  that the preimage $\tau ^{-1}(\overline{\Zz_{G}(x_s)\cdot x_n} \setminus \Zz_{G}(x_s)\cdot x_n)$  is  a  Lagrangian submanifold in  $\Zz_{G}(x_s) * _{P(x_s)} \n_2(x_s)$  which is the cotangent  bundle  $T^* (\Zz_{G}(x_s)/P(x_s))$  supplied with the natural symplectic  structure. Using Lemma \ref{lem:pois1} we  summarize   our examination of Example \ref{nil} in the following

\begin{proposition}\label{pois1} Let $x = x_n + x_s$  where $x_n$ is a minimal  nilpotent  element in $\Zz_\g (x_s)$.  Then $C^\infty_1 (\overline{G(x)})$ is a weakly symplectic  and Poisson smooth structure.   If $x_s = 0$, then $C^\infty_2(\overline{G(x)})$  is  a weakly symplectic and  Poisson smooth structure.
\end{proposition}

\subsection{The existence of Hamiltonian flows}
Let $(X, \om)$ be a stratified symplectic space  and $C^\infty (X)$  a  Poisson smooth structure on $X$.  For any $H \in C^\infty (X)$  
 we define  a linear operator   $X_H: C^\infty (X) \to  X^\infty (X)$  by
 $$X_H (f): = \{  f, H\} _\om \text{  for } f \in C^\infty (X).$$
 By  definition, for given $H$,  the value $X_H(f)(x)$ depends only on the value $df(x)$.  Hence  $X_H$ is a  section  of  the Zariski  tangent bundle    of $X$.
 We call $X_H$ {\it  the Hamiltonian vector field  associated with $H$}.
 
\begin{lemma}\label{hav} The Hamiltonian vector field $X_H$
 is a smooth Zariski vector field   on  $X$.  If $x$ is a point in  a stratum $S$, then  $X_H (x) \in T_x  S$.
 \end{lemma}

\begin{proof}  By   definition of a Poisson structure, the function $X_H (f)$ is smooth for all $f \in C^\infty (X)$.
Hence $X_H$ is a smooth Zariski vector field.  This proves the  first assertion of Lemma \ref{hav}. To prove the second
assertion it suffices to show that, if the restriction of a function $f\in C^\infty (X)$ to a neighborhood $U_S(x) \subset S$ of  a point $x \in S$ is zero, then  $X_H (f)(x)  = 0$. The last identity holds, since $X_H (f) (x)$ is equal to the  Poisson bracket of  the restriction 
of $H$ and $f$ to $S$.  This completes the proof.
\end{proof}

\begin{theorem}\label{ham} (cf. \cite[\S 3]{SL1991}) Given a  Hamiltonian function  $H\in C^\infty (X)$ and a point $x \in X$ there
exists  a unique  smooth curve $\gamma: (-\eps, \eps) \to X$ such that for any $ f\in C^\infty (X)$ we have
\begin{equation}
{d\over dt}   f (\gamma (t)) = \{  f, H\}.\label{eq:flow}
\end{equation}
\end{theorem}

\begin{proof}  Let $\Phi_t(x)$  be   the flow that is generated by $X_H|_S$ on  each stratum $S\subset X$. Since $C^\infty (X)$ is Poisson,  the validity  of (\ref{eq:flow}) for $\gamma(t): = \Phi_t$ follows  from Lemma \ref{hav}. This proves the existence of  a flow satisfying (\ref{eq:flow}).

Now let us prove the uniqueness of the   flow satisfying (\ref{eq:flow}), 
using  Sjamaar's and Lerman's argument  in \cite[\S 3]{SL1991}.  
Let $x\in X$ and $\gamma (t), \,  t\in (-\eps_1, \eps_1)$  be an integral curve of the equation (\ref{eq:flow})  with $\gamma _0 (0) =
x$.   We will show that  $\Phi _{-t }(\gamma (t)) = x$ for all $0\le t \le \min (\eps, \eps _1)$.   By Corollary \ref{germ}    
smooth functions on $X$ separate  points. Therefore it suffices  to show that  for all $t
\le \min (\eps, \eps _1)$ and for
all $f \in C^\infty(X)$ we have
\begin{equation}
f(\Phi_{-t }(\gamma _t (t))) = f(x).\label{id1}
\end{equation}
As in \cite[\S 3]{SL1991}, using (\ref{eq:flow}), we have 
$${d\over dt}  f( \Phi _{-t }(\gamma  (t))) = \{ H, f\}_\om  ( \gamma (t)) + \{ f, H\} _\om  (\gamma (t)) = 0.$$
 This implies (\ref{id1})  and completes  the proof of  Theorem  \ref{ham}.     
\end{proof}

\begin{remark} \label{hsl} 1. In Example  \ref{quot}  we  proved that  the smooth structure on  a  singular  symplectic reduced space $(M^{2n},\om)//G$ defined by Sjamaar and Lerman  in \cite{SL1991}   is  a Poisson smooth structure in sense of our definition. Thus,  their result on the existence of a Hamiltonian flow  on $(M^{2n},\om)//G$ in \cite[\S 3]{SL1991} is  a consequence  of  our Theorem \ref{ham}.

2. In \cite{SL1991} Sjamaar and Lerman  used a slightly different  method for their
proof of the existence  of a Hamiltonian flow on  the singular  symplectic reduced space $(M^{2n},\om)//G$. They looked at the  corresponding Hamiltonian flow  on  $M$ and showed that this flow
descends to a Hamiltonian flow on  the reduced space. 
\end{remark}

\subsection{Brylinski-Poisson homology}
In this subsection we extend  the study  of the Brylinski-Poisson homology   of  symplectic pseudomanifolds with isolated conical singularities
in \cite[\S 4]{LSV2010} to the case of  stratified symplectic spaces  $X$ equipped with a  Poisson smooth structure.

 Assume  that  $C^\infty (X)$  is a   Poisson smooth structure.
We consider {\it the  canonical complex}
$$\to \Om ^{n+1}(X) \stackrel{\delta}{\to}  \Om^n (X) \to ...  ,$$
where $\delta$ is  a  linear operator  defined as follows. Let $\alpha \in \Om (X)$  and $\alpha = \sum_j f_0  ^j df_1 ^j \wedge  df_p ^j$ be a local representation of $\alpha$ as in  Definition \ref{smoothf}.  Then we set (see \cite{Kozsul1985},  \cite{Brylinski1988}):
 $$ \delta (f_0 df_1 \wedge \cdots \wedge df_n) := \sum_{i =1}^n (-1) ^{i+1} \{ f_0 , f_i\}_\om df_1 \wedge \cdots \wedge  \widehat{df_i} \wedge  \cdots \wedge df_n $$
  $$ + \sum_{1\le i < j \le n} (-1)^{i+j}f_0 d \{ f_i, f_j\}_\om \wedge df_1 \wedge \cdots \wedge \widehat {df_i} \wedge  \cdots \wedge \widehat {df_j} \wedge \cdots \wedge df_n.$$
  

\begin{lemma}\label{Bry3} (cf. \cite[Lemma 4.3]{LSV2010})
1. $\delta  =  i(G_\om)\circ d - d \circ  i(G_\om)$. In particular,  $\delta$ is well-defined.\\
2. $\delta^2 = 0$. 
\end{lemma}
\begin{proof} Recall that  $i$ denotes the canonical inclusion $X^{reg} \to X$. 

1. Let $\alpha \in \Om (X)$, then $i^* \alpha \in  \Om _u (X^{reg})$.
Using   
$$\delta \circ  i^* = i^* \circ  \delta, \,  i^* \circ d = d \circ i^* ,$$ 
and the validity of the first assertion  of Lemma \ref{Bry3}  for any smooth Poisson manifold $M$
\cite[Lemma 1.2.1]{Brylinski1988},  we have
\begin{equation}
i ^ * (\delta \alpha)   = \delta (i^* \alpha) = i ^ * ( i(G_\om)  \circ d \alpha - d \circ i (G_\om) \alpha) .\label{eq:com}
\end{equation}
 By Lemma \ref{injf}, $i^*$ is injective,  hence  the above   equality implies the first assertion of Lemma \ref{Bry3}.

2.  The second statement  of Lemma \ref{Bry3} is proved in the same  way, using the  injectivity of $i^*$. This completes the  proof of Lemma \ref{Bry3}.
\end{proof}

The following theorem \ref{hodge}
generalizes  \cite[Corollary 4.2]{LSV2010} . 

\begin{theorem}\label{hodge} Suppose $(X, \om)$ is a stratified symplectic space equipped with a Poisson smooth structure $C^\infty (X)$  which is also  weakly symplectic. If all regular  strata of $X$  has the same dimension  $2n$, the  Brylinski-Poisson  homology  of the complex $(\Om(X) , \delta)$  is isomorphic
to  the de Rham cohomology of $X$ with   reverse grading : $H_k (\Om( X), \delta) = H ^{2n -k} (\Om(X) , d)$. If, moreover,  the smooth structure  $C^\infty (X)$ 
is locally smoothly contractible, $H_k (\Om( X), \delta)$ is equal to the
 singular cohomology $H^{2n-k}(X, \R)$.
 \end{theorem}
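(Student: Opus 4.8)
The plan is to follow J.-L. Brylinski's symplectic Hodge-theoretic argument \cite{Brylinski1988}, in the form already used for pseudomanifolds with isolated conical singularities in \cite[Corollary 4.1]{LSV2010}. On a genuine symplectic manifold $(M^{2n},\om)$ Brylinski introduces a symplectic star operator $*:\Om^k(M)\to\Om^{2n-k}(M)$; it is an $\R$-linear bundle involution ($*^2=\mathrm{id}$) and it intertwines the two differentials in the sense $\delta_{|\Om^k}=(-1)^{k+1}\,{*}\,d\,{*}$. The essential feature we shall exploit is that $*$ is assembled purely from the two operators $L:=\tilde\om\wedge(\cdot)$ and $\Lambda:=i(G_\om)$: underlying the Lefschetz $\ssl_2$-action, $*$ equals a fixed universal finite $\R$-linear combination of the compositions $L^a\Lambda^b$, and therefore makes sense on any differential complex carrying such operators.

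First I would verify that $*$ is defined on $\Om(X)$ and is an involution there. Since $C^\infty(X)$ is compatible with $\om$ there is $\tilde\om\in\Om^2(X)$ restricting to $\om_i$ on each stratum, so $L=\tilde\om\wedge(\cdot):\Om^k(X)\to\Om^{k+2}(X)$ is defined; since $C^\infty(X)$ is Poisson, Remark \ref{pois0} furnishes the contraction operator $\Lambda=i(G_\om):\Om^k(X)\to\Om^{k-2}(X)$. Inserting these into Brylinski's universal formula produces $*:\Om^k(X)\to\Om^{2n-k}(X)$. By Remark \ref{injf} the restriction map $i^*:\Om(X)\to\Om(X^{reg})$ is injective and it manifestly commutes with $L$, with $\Lambda$, hence with $*$; since $*^2=\mathrm{id}$ holds on the symplectic manifold $X^{reg}$, it holds on $\Om(X)$, so $*$ is an isomorphism exchanging degrees $k$ and $2n-k$.

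Next I would transport the Brylinski identity to $\Om(X)$. The operators $\delta$ (well defined on $\Om(X)$ by Lemma \ref{bryl}), $d$ and $*$ all act on $\Om(X)$ and commute with $i^*$; on $X^{reg}$ one has $\{f,g\}_\om=G_\om(df\wedge dg)$, so Brylinski's theorem gives $\delta_{|\Om^k}=(-1)^{k+1}\,{*}\,d\,{*}$ there, and by injectivity of $i^*$ the same identity holds on $\Om(X)$. Thus $*$ is, up to the sign $(-1)^{k+1}$ in each degree, an isomorphism of complexes from the canonical complex $(\Om^\bullet(X),\delta)$ onto the de Rham complex $(\Om^{2n-\bullet}(X),d)$; passing to (co)homology yields $H_k(\Om(X^{2n}),\delta)\cong H^{2n-k}(\Om(X^{2n}),d)$, the first assertion. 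For the second assertion, under local smooth contractibility I would prove an abstract de Rham theorem for $(\Om(X),d)$. Sheafifying $U\mapsto\Om^k(U)$ one gets sheaves $S\Om^k$; existence of smooth partitions of unity (Proposition \ref{fin}, applied after refining an arbitrary open cover of the paracompact, locally compact Hausdorff space $X$ to a locally finite cover by relatively compact open sets) shows each $S\Om^k$ is fine, hence acyclic. Local smooth contractibility, together with the product smooth structure on $U(x)\times[0,1]$ and the associated fibre-integration operator \cite{Mostow1979}, gives the Poincaré lemma: the augmented complex $0\to\R_X\to S\Om^0\xrightarrow{d}S\Om^1\to\cdots$ is locally exact. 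Hence it is a fine resolution of $\R_X$, so $H^k(\Om(X),d)=H^k(X,\R_X)$; since $X$ is locally contractible (locally it is $B\times cL(1)$, and a cone is contractible), $H^k(X,\R_X)$ equals the singular cohomology $H^k(X,\R)$, and combining with the first part finishes the proof.

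The step I expect to be the real obstacle is the Poincaré lemma in the locally smoothly contractible case: one must check that the classical chain homotopy $dh+hd=\mathrm{id}-(\mathrm{ev}_x)^*$ persists for the rather weak notion of smooth differential form of Definition \ref{smoothf}, i.e. that the fibre integration $h\alpha=\int_0^1 \iota_{\partial_t}\sigma^*\alpha\,dt$ of a smooth form on $U(x)\times[0,1]$ again lies in $\Om^{\bullet-1}(U(x))$. This is exactly the point where the precise description of the product smooth structure of \cite{Mostow1979} is needed; everything on the symplectic side is formal once Remark \ref{injf} and Remark \ref{pois0} are available.
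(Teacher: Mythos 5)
Your proposal is correct and shares the paper's overall skeleton: show that the symplectic star operator preserves $\Om(X^{2n})$, transport Brylinski's identity $\delta=(-1)^{\deg+1}*_\om d\, *_\om$ from $X^{reg}$ to $\Om(X^{2n})$ via the injectivity of $i^*$ (Remark \ref{injf}), and invoke Mostow's de Rham theorem for the locally smoothly contractible case. Where you genuinely diverge is at the central technical point, which in the paper is Proposition \ref{sstar} ($*_\om$ maps $i^*(\Om^k(X^{2n}))$ onto $i^*(\Om^{2n-k}(X^{2n}))$). You obtain this from the classical Weil-type fact that, degree by degree, $*_\om$ on $\Lambda^\bullet T^*_xX^{reg}$ is a fixed noncommutative polynomial in $L$ and $\Lambda=i(G_\om)$ with coefficients depending only on $(n,k)$, combined with the stability of $\Om(X^{2n})$ under $L$ (compatibility with $\om$) and under $\Lambda$ (the Poisson hypothesis via Remark \ref{pois0}.3). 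The paper instead runs an induction on the degree: it defines $\Om_A(X^{2n})=\{\gamma:\ *_\om i^*\gamma\in i^*(\Om(X^{2n}))\}$, checks that it contains $\Om^0$ and $\Om^{2n}$, is a $C^\infty(X^{2n})$-module, is symmetric under $*_\om$ and stable under $d$ (the last using that $\delta$ preserves $\Om(X^{2n})$ by Lemma \ref{bryl}), and then climbs up the degrees using that $\Om^k$ is generated over $C^\infty(X^{2n})$ by $d\Om^{k-1}$. Your route is shorter and makes the role of the two hypotheses (compatibility gives $L$, Poisson gives $\Lambda$) completely transparent, at the price of importing the universal formula for $*_\om$ on the Lefschetz components, which is not in Brylinski's paper in that form and would need a precise reference or a short $\ssl_2$-theoretic proof; the paper's induction uses only $*_\om^2=\mathrm{Id}$ and the identity $\delta=\pm *_\om d *_\om$ on the regular part. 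For the second assertion the paper simply cites \cite[Theorem 5.2]{Mostow1979}; your fine-resolution argument is a sketch of that theorem's proof, and the fibre-integration point you flag as the obstacle is exactly what Mostow's hypotheses (partitions of unity plus local smooth contractibility, both available here by Proposition \ref{fin} and assumption) are designed to handle.
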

\begin{proof} 
 Using the injectivity of $i^*$, we  derive   from (\ref{eq:com})
 the following  formulas  for all $k$: 
\begin{equation}
H_k (\Om(X), \delta) = H_k (i^*(\Om(X), \delta) \text{ and }  H^k(\Om(X), d) = H^k(i^*(\Om(X), d).\label{eq:iso}
\end{equation}
    
Since $C^\infty (X)$ is weakly symplectic,  there  exists  a symplectic   form $\tilde \om\in \Om^2 (X)$  such that $i^*(\tilde \om) |_{S_i} = \om_i$  for any
stratum $S_i$. 
Set $vol: = \tilde \om ^n/n!$. 
 Let  $ \tilde G^k_\om$  be the  pairing: $\Lambda^k (T^*X) \times \Lambda ^k(T^*X) \to C^\infty (X)$  associated with $\tilde G_\om$ whose existence  is  shown in Remark \ref{pois0}.2. We define  a symplectic star operator 
 $*_\om: \Om^k (X) \to \Om ^{2n-k}(X)$  as follows (cf.  \cite[\S 2.1]{Brylinski1988}).
\begin{equation}
 *_\om : \Om ^k (X) \to \Om^{2n-k} (X),\,  \beta \wedge *_\om \alpha : =  \tilde G^k_\om  (\beta, \alpha) \wedge \frac {\tilde \om ^n} { n !},
\nonumber 
\end{equation}
for all $\alpha, \beta  \in  \Om ^k (X)$.  In particular, on singular strata,  the  image  of $*_\om$ is zero.

For the sake of simplicity   we also denote by $\om$ the restriction of $\om $ to $X^{reg}$. The following Proposition is  proved by repeating  the proof of Proposition 4.2 in \cite{LSV2010}
 word-by-word, so we omit its proof.  

\begin{proposition}\label{sstar}    We have $*_\om(i^* (\Om ^k (X))) =i^*( \Om ^{2n -k} (X))$. 
\end{proposition}

The first assertion of Theorem \ref{hodge} follows immediaately  from (\ref{eq:iso})  and Proposition \ref{sstar}.  The second assertion of Theorem \ref{hodge}   follows from \cite{Mostow1979}. This completes the proof of Theorem \ref{hodge}.
\end{proof}

\subsection{A Leftschetz decomposition}
The notion of a Leftschetz decomposition  on a   symplectic manifold $(M^{2n}, \om)$  has been introduced by Yan in \cite{Yan1996}, where he gives an alternative proof the  Mathieu theorem on harmonic cohomology classes of $(M^{2n}, \om)$  using the
Leftschetz decomposition. 
Roughly  speaking, a Leftschetz decomposition on a symplectic  manifold $(M^{2n},\om)$ is an $\ssl_2$-module -structure
of $\Om (M^{2n})$.  The Lie algebra $\ssl_2$ acting on $\Om(M^{2n})$ is generated by  linear operators  $L,L^*, A$
  defined  as follows.
$L$ is the wedge multiplication   by $\om$, $L^*  :=  i(G_\om)$, and $A = [L^* , L]$.

Now assume that $(X, \om)$ is a stratified symplectic space  provided with a Poisson smooth structure $C^\infty (X)$, which is  also weakly symplectic. Then $\Om (X)$ is stable  under $L, L^*$. Hence we  obtain immediately

\begin{lemma}\label{stable}  The  space $\Om (X)$ is  an $\ssl_2$-module, where $\ssl_2$ is  the Lie algebra
generated by $(L, L^*, A=[L,L^*])$.
\end{lemma}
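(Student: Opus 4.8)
The plan is to reduce the claim entirely to the corresponding statement on $X^{reg}$, exploiting the injection $\Om(X^{2n})\hookrightarrow\Om(X^{reg})$ from Remark \ref{injf}, exactly as was done for the individual operators $L$ and $L^*$ in the preceding paragraph. On a smooth symplectic manifold it is classical (see \cite{Yan1996}, \cite{Brylinski1988}) that $L$, $L^*:=i(G_\om)$ and $A=[L^*,L]$ generate a representation of $\ssl_2$ on the space of all differential forms: the relations $[A,L]=-2L$, $[A,L^*]=2L^*$, $[L^*,L]=A$ hold pointwise on $\Lambda^\bullet T^*_x M$ purely by linear algebra in a Darboux chart, hence they hold on $\Om(M)$. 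So on $\Om(X^{reg})$ all three operators are defined and the $\ssl_2$-relations hold there.

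First I would recall that $\Om(X^{2n})$ is stable under $L$ and $L^*$: stability under $L$ is immediate because $\om=\tilde\om\in\Om^2(X^{2n})$ (the compatibility hypothesis) and $\Om(X^{2n})$ is closed under wedge product; stability under $L^*=i(G_\om)$ is exactly Proposition \ref{sstar} (via the identity relating $i(G_\om)$ and $*_\om$, or directly via Remark \ref{pois0}.3, which gives a well-defined contraction $\tilde G_\om:\Om^k(X^{2n})\to\Om^{k-2}(X^{2n})$). Consequently $A=[L^*,L]=L^*L-LL^*$ also maps $\Om(X^{2n})$ into itself. Next I would observe that all the bracket identities among $L,L^*,A$, being identities of linear operators, already hold on the larger space $\Om(X^{reg})$ by the classical computation; since $\Om(X^{2n})\hookrightarrow\Om(X^{reg})$ is an inclusion compatible with $L$, $L^*$, $A$ (each of these operators on $\Om(X^{2n})$ is just the restriction of the corresponding operator on $\Om(X^{reg})$, because $i^*$ intertwines wedge-with-$\om$ and contraction-with-$G_\om$), the same identities hold on $\Om(X^{2n})$. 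Hence $\Om(X^{2n})$ is an $\ssl_2$-submodule.

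The only point needing care — and the step I expect to be the main obstacle, modest as it is — is checking that $L^*$ on $\Om(X^{2n})$ genuinely agrees with the restriction of $L^*$ on $\Om(X^{reg})$, i.e. that the globally defined contraction $\tilde G_\om$ of Remark \ref{pois0} coincides after applying $i^*$ with the naive pointwise contraction $i(G_{\om_i})$ on each stratum and in particular on the regular part. This is precisely the content of equation (\ref{biv0}) together with the fact, from Remark \ref{pois0}.3, that the extension of $\tilde G_\om$ to $k$-forms is independent of the local representative and agrees on $U\subset X^{reg}$ with the honest contraction. Granting this compatibility, the intertwining of $i^*$ with $L$ and $A$ is automatic, and the proof is complete by transporting the $\ssl_2$-relations from $\Om(X^{reg})$ down to $i^*(\Om(X^{2n}))\cong\Om(X^{2n})$.
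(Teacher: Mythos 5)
Your proposal is correct and follows essentially the same route as the paper: the paper also deduces stability of $\Om(X^{2n})$ under $L$ and $L^*$ from Proposition \ref{sstar} (together with smoothness of $\om$ itself) and then transports the $\ssl_2$-relations from $\Om(X^{reg})$ through the inclusion $i^*(\Om(X^{2n}))\subset\Om(X^{reg})$ of Remark \ref{injf}. Your extra care in checking that $L^*$ on $\Om(X^{2n})$ is the restriction of the contraction on the regular part is a detail the paper leaves implicit, but it is the same argument.
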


The next Lemma concerns  the Leftschetz decomposition of $\Om_u(X^{reg})$.  
Recall that  $\alpha \in \Om_u (X^{reg})$ is  called {\it   primitive}, if  $L^* \alpha  =0$.
We define the dimension  function $\mathrm{d }: X\to \Z$ by setting
$\mathrm{d}(x): = \dim S_x$, where $S_x$ is the connected  component  of the stratum containing $x$.

\begin{lemma}\label{rec} 1. For any $\gamma \in \Om ^k_u(X^{reg})$  and any $r\in \Z$ we have 
$$[L^r, L^*]\gamma= (r (k-{\mathrm{d}\over 2}) + r (r-1)) L^{r-1}.$$
2. There exists a   function $c: \Z \times \Z \to \R$ such that for any    primitive form $\gamma \in \Om  ( X^{reg})$  we have 
$\gamma  = c(\mathrm{d}, k)\cdot (L ^*) ^k \circ ( L^k \gamma)$.
\end{lemma}

\begin{proof} The first assertion of Lemma \ref{rec} for $r=1$ is well-known, see \cite[Corollary 1.6]{Yan1996}.
For $r\ge 2$ we use the following formula
$$[L^r, L^*] = L [L^{r-1}, L^*] + [ L, L^*] L^{r-1},$$
which leads to the first  assertion of Lemma \ref{rec} by induction.

2. The second assertion of Lemma \ref{rec} is proved by applying the first assertion recursively.
\end{proof}

For any $k \ge 0$  set 
$$\Pp_{k} (X) : = \{ \alpha \in \Om ^{k} (X) |\, \alpha \text { is  primitive }\}.$$

\begin{proposition}\label{hlv} Assume that $C^\infty(X)$ is both Poisson  and weakly symplectic. If all regular strata  of $X$  have the same dimension  $2n$, then we have the following  Leftschetz  decomposition   for $k \ge 0$
\begin{equation}
\Om ^{k}(X) = \Pp_{k} (X) \oplus L (\Pp_{k-2} (X)) \oplus \cdots \,  .\nonumber
\end{equation}
\end{proposition}

\begin{proof} Using the Leftschetz decomposition  on symplectic  manifold,  for $\alpha \in \Om^k (X)$
we decompose $i^*(\alpha) \in i^*(\Om^{k} (X))$ as 
\begin{equation}
i^*(\alpha)= \alpha_p ^ {k} + L ( \alpha _p ^{k-2}) +   \cdots + L^{[(n-k)/2]} \alpha _p ^{n-k- 2[(n-k)/2]},
\label{hl1}
\end{equation}
where $\alpha_p ^ j$  are  primitive  forms in $\Om_u ^j (X^{reg})$.
To prove Proposition \ref{hlv}, using the injectivity  of $i^*$,
it suffices to show that $\alpha_p ^j$  are elements in $i^* (\Om (X))$.
Now let us consider  the decomposition  of $i^*(\alpha) \in \Om ^{k}_u (X^{reg})$.  We will show 
that  all terms $\alpha_p ^j$ can be obtained from  a  linear combination of  $L^p(i^*(\alpha)),  (L^*)^p (i^*(\alpha)), \, p \ge 0,$ inductively  on  the degree $j$.

 First we assume that $k$  is even, i.e. $ k =2q$,
hence $\alpha_p ^0 \in C^\infty_u (X^{reg})$.  Applying to the both sides of (\ref{hl1}) the operator $L^{n-q}$ we get
\begin{equation}
 L^{n-q} (i ^* (\alpha) ) = \om ^n \cdot \alpha _p ^0 \in i ^*(\Om^{2n} (X)).\label{p0}
 \end{equation}
By  Proposition \ref{sstar}, (\ref{p0}) implies that $\alpha_p ^0 \in   i^*(C^\infty (X))$, what is required to prove in the first induction step.

Now let us assume that $k = 2q +1$.  As in (\ref{p0}), we have
\begin{equation}
L^{n-q-1} (i^*(\alpha)) = \om ^{n-1} \cdot \alpha _p^1 \in i ^* (\Om ^{2n-1} (X)).
\label{p1}
\end{equation}
 Taking into account $L^* \alpha ^1_p = 0$, using Lemma \ref{rec} and (\ref{p1}), the term $\alpha_p^1$  can be obtained  from
$L^{n-q-1} (i^*(\alpha))$ by applying the operator $c_{n, 1}\cdot (L^*) ^{n-1}$.  Hence $\alpha _p ^1 \in i^* (\Om (X^{2n}))$,  which completes the next induction step.

Repeating this procedure, we get all   terms  $\alpha _p ^j$, which  belong to 
$i^* (\Om (X))$.  
This completes the proof of Proposition \ref{hlv}.
\end{proof}

Since $[L,d] = 0$  holds on $\Om_u (X^{reg})$   and $i^*(\Om (X))$ is  stable under the action of $d$ and $L$,
the  equality $[L, d] = 0$ also holds on $\Om (X)$.  In particular, the   wedge product with $[\om ^k] $ maps
$H^{l-k} ( \Om (X), d)$ to $H^{l+k} (\Om (X), d)$  for any $l \in \Z$. A   stratified symplectic space $(X^{2n},\om)$  of dimension $2n$ equipped with a  Poisson  weakly symplectic smooth structure $C^\infty (X^{2n})$  
is  said  {\it to satisfy the hard
Lefschetz condition}, if the cup product
$$[\om  ^k] :  H^{n -k} (\Om (X^{2n}),d) \to  H^{ n +k} (\Om (X^{2n}),d)$$
is surjective  for any $k \le n = {1\over 2} \dim  X^{2n}$.
A    differential form  $\alpha \in \Om (X^{2n})$ is called {\it harmonic}, if $ d\alpha = 0 = \delta \alpha$.
Let us abbreviate $H^*(\Om (X^{2n}), d)$ by $H^*_{dR} ( X^{2n})$.

\begin{theorem}\label{harm}  Let $(X^{2n}, \om)$ be a  stratified  symplectic  space and $C^\infty (X^{2n})$
  Poisson smooth structure  which is also weakly synmplectic. Assume that  all regular strata of $X$ have the same dimension $2n$. Then the  following two assertions are equivalent:
 \begin{enumerate}
 \item
 Any  cohomology class in $H^*_{dR}(X^{2n})$  contains  a harmonic  cocycle.
\item
 $(X^{2n}, \om)$  satisfies   the hard Lefschetz  condition.
\end{enumerate}
\end{theorem}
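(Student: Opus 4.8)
The plan is to mimic Yan's proof of the Mathieu--Mathieu theorem (see \cite{Mathieu1995}, \cite{Yan1996}) for compact symplectic manifolds, transplanting it to the stratified setting by systematically using the inclusion $i^*(\Om(X^{2n})) \subset \Om(X^{reg})$ together with the stability results already established: $\Om(X^{2n})$ is an $\ssl_2$-module by Lemma \ref{stable}, the Lefschetz decomposition of Proposition \ref{hlv} holds at the level of smooth forms on $X^{2n}$, and $*_\om$ preserves $i^*(\Om(X^{2n}))$ by Proposition \ref{sstar}. The key point that makes this work is that all the relevant operators ($L$, $L^*$, $A$, $d$, $\delta$, $*_\om$) commute with $i^*$ and map $\Om(X^{2n})$ to itself, so every identity valid on $X^{reg}$ automatically descends to $X^{2n}$ once both sides are known to lie in $i^*(\Om(X^{2n}))$.

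First I would prove the implication (2)$\Rightarrow$(1). Assume the hard Lefschetz condition. Fix a cohomology class in $H^{n+k}_{dR}(X^{2n})$; by surjectivity of $[\om^k]\colon H^{n-k}_{dR}(X^{2n})\to H^{n+k}_{dR}(X^{2n})$ it is represented by $L^k\beta$ for some $d$-closed $\beta\in\Om^{n-k}(X^{2n})$, which by Proposition \ref{hlv} we may take primitive. Then $\delta(L^k\beta) = \pm *_\om d *_\om L^k\beta$ by the formula (\ref{del}) of Theorem \ref{hodge}; using $*_\om L^k\beta = \pm L^{n-k}\circ(\text{primitive projection})$-type identities and $[L,d]=0$, a computation on $X^{reg}$ (valid for $X^{2n}$ since all terms are smooth on $X^{2n}$) shows $\delta(L^k\beta) = 0$, so $L^k\beta$ is harmonic. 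For classes in $H^{n-k}_{dR}(X^{2n})$ one uses the symplectic star duality $*_\om\colon H^{n-k}_{dR}\to H^{n+k}_{dR}$ (which is an isomorphism by Proposition \ref{sstar} and the first assertion of Theorem \ref{hodge}), transporting the harmonic representative back. Since $d$ and $\delta$ are local operators on $X^{reg}$ and $\Om(X^{2n})\hookrightarrow\Om(X^{reg})$ is injective, harmonicity on $X^{reg}$ implies harmonicity on $X^{2n}$.

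For the converse (1)$\Rightarrow$(2) I would argue as in Yan: suppose every class in $H^*_{dR}(X^{2n})$ has a harmonic representative, and fix $k\le n$ and a class $c\in H^{n+k}_{dR}(X^{2n})$, represented by a harmonic $\alpha\in\Om^{n+k}(X^{2n})$. Decompose $\alpha$ by Proposition \ref{hlv} as $\alpha = L^k\alpha_p^{n-k} + L^{k+1}\alpha_p^{n-k-2} + \cdots$. The harmonicity $\delta\alpha=0$, combined with $\delta = i(G_\om)d - d\,i(G_\om) = \pm L^* d - \pm d L^*$ (Lemma \ref{bryl}) and the $\ssl_2$-relations of Lemma \ref{stable}, forces $d\alpha_p^{n-k-2j}=0$ for all $j\ge 0$; this is the standard representation-theoretic computation in $\ssl_2$-modules, and it goes through verbatim on $\Om(X^{2n})$ because Proposition \ref{hlv} gives the primitive components inside $i^*(\Om(X^{2n}))$. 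Then $\alpha$ is cohomologous modulo exact forms to $L^k\alpha_p^{n-k}$ with $\alpha_p^{n-k}$ $d$-closed, exhibiting $c$ as $[\om^k]$ applied to $[\alpha_p^{n-k}]\in H^{n-k}_{dR}(X^{2n})$; hence $[\om^k]$ is surjective.

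The main obstacle I anticipate is bookkeeping rather than conceptual: one must verify that each intermediate form produced by the $\ssl_2$-manipulations (the primitive components $\alpha_p^i$, the forms $L^*d\alpha$, $*_\om\alpha$, etc.) actually belongs to $i^*(\Om(X^{2n}))$ and not merely to $\Om(X^{reg})$, so that the identities $\delta = \pm*_\om d*_\om$, $[L,d]=0$, $*_\om^2=\mathrm{Id}$ may be invoked on $X^{2n}$. All of these are already supplied: Proposition \ref{sstar} handles $*_\om$, Proposition \ref{hlv} handles the primitive projections, Lemma \ref{stable} handles $L,L^*,A$, and the $d$-stability follows from $[L,d]=0$ on $X^{2n}$ noted just before the theorem; so the obstacle is really only that of assembling these ingredients carefully. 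Beyond that, the proof is a word-for-word transcription of Yan's argument \cite{Yan1996}, with ``$\Om(M^{2n})$'' replaced by ``$i^*(\Om(X^{2n}))\subset\Om(X^{reg})$'' throughout.
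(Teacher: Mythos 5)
Your overall strategy --- transplanting Yan's proof by checking that $L$, $L^*$, $d$, $\delta$, $*_\om$ all preserve $i^*(\Om(X^{2n}))$, citing Proposition \ref{sstar}, Proposition \ref{hlv} and Lemma \ref{stable} for this --- is exactly what the paper does, and your treatment of (1)$\Rightarrow$(2) (harmonicity forces the primitive components to be closed, whence the class lies in the image of $[\om^k]$) is a correct, if differently packaged, version of the paper's commutative-diagram argument with harmonic cohomology.

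The gap is in (2)$\Rightarrow$(1), at the sentence ``it is represented by $L^k\beta$ for some $d$-closed $\beta$, which by Proposition \ref{hlv} we may take primitive.'' Proposition \ref{hlv} is a decomposition of \emph{forms}, and the primitive components of a $d$-closed form are in general not $d$-closed (this failure is precisely why hard Lefschetz is a nontrivial hypothesis rather than an identity), so you cannot simply replace $\beta$ by its primitive part and retain closedness. Producing a representative that is simultaneously closed and primitive is the actual content of this implication, and it is where the paper (following Yan) does the real work: hard Lefschetz gives $H^{n-k}_{dR} = \operatorname{Im} L + P_{n-k}$ on cohomology, the $\operatorname{Im} L$ part is handled by induction, and for a primitive class $[z]$ one writes $z\wedge\om^{k+1} = d\gamma$, uses the surjectivity of $L^{k+1}$ on forms (Proposition \ref{hlv}) to write $\gamma = \om^{k+1}\wedge\theta$, and replaces $z$ by $w = z - d\theta$, which is closed and satisfies $w\wedge\om^{k+1}=0$, i.e. is a primitive form; then $\delta w = [L^*,d]w = 0$ because $L^*w=0$ and $dw=0$. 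Your subsequent observation that $\delta(L^k\beta)=0$ once $\beta$ is closed and primitive is correct, but without the correction step $z\mapsto z-d\theta$ the argument does not get off the ground. (The detour through $*_\om$-duality for classes of degree below $n$ is also unnecessary once one runs the induction over the Lefschetz decomposition of cohomology, as the paper does.)
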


\begin{proof}  The proof of  Theorem \ref{harm}  for  smooth  symplectic manifolds by Yan in \cite[Theorem 0.1]{Yan1996}
can be repeated word-by-word.  For the convenience of the reader we outline a proof here.
Denote by $H^k_{hr} (X^{2n})$ the space of all harmonic $k$-forms on $(X^{2n},\om)$, and  let $H^*_{hr} = \oplus _{ i = 0} ^{2n} H^i _{hr} (X^{2n})$.

Now let us prove that the assertion (1) of Theorem \ref{hodge} implies the assertion (2) of Theorem \ref{hodge}.
We consider the following diagram
$$
\xymatrix{ H^{n-k}_{hr} (X^{2n}) \ar[d] \ar[r]^{L^k} & H^{n+k}_{hr} (X^{2n}) \ar[d]\\
H^{n-k}_{dR} (X^{2n}) \ar [r]^{L^k} &  H^{n+k}_{dR} (X^{2n}). 
}$$
Let us  recall that $i : X^{reg} \to X^{2n}$ is  the canonical inclusion.  Since $[L, \delta] = -d$ \cite[Lemma 1.2]{Yan1996}, which can be   easily proved  for  $(X^{2n}, \om)$ satisfying the condition  of Theorem \ref{harm}, 
Proposition \ref{hlv} implies that $L^k :  H ^{n-k}_{hr}(X^{2n})  \to  H^{n+k}_{hr} (X^{2n})$
is an isomorphism.  Since   the vertical arrows in the diagram are surjective,
we conclude that  the second horizontal arrow in the diagram is also surjective.
This proves (1)  $\LRA$ (2).

Now let us prove  that (2) $\LRA$ (1).  Note that the condition (2) implies that  \cite[\S 3]{Yan1996}
$$ H^{n-k}_{dR} (X^{2n}) = Im\, L + P_{n-k},$$
where $P_{n-k} := \{ \alpha \in H^{n-k}_{dR} (X^{2n}) |\, L^{k+1} \alpha = 0\in H^{n+k+2}_{dR} (X^{2n})\}.$

Using induction argument, it suffices to prove that in each primitive  cohomology class $v \in P_{n-k}$
there is a harmonic cocycle.  Let $v = [z]$, $z\in \Om ^{n-k} (X^{2n})$. Since $v$ is primitive we have $[z \wedge \om ^{k+1}] = 0\in H^{m +k +2}_{dR} (X^{2n})$. Hence, $z\wedge \om ^{k+1} = d\gamma$ for some $\gamma \in \Om ^{n +k +1} (X^{2n})$.  By Proposition \ref{hlv} the operator $L ^{k+1} : \Om ^{n-k-1} (X^{2n}) \to \Om ^{n+k +1} (X^{2n})$ is onto,
consequently there exists  $\theta \in \Om ^{m-k-1} (X^{2n})$ such that  $\gamma = \theta \wedge \om ^{k+1}$. It follows that
$(z-d\theta) \wedge \om ^{k+1} = 0$.  Therefore $w = z-d\theta$ is primitive and closed. Since $[L^*, d] = \delta$
\cite[Corollary 1.3]{Yan1996}, we obtain $\delta w =0$. This completes the proof of Theorem \ref{hodge}.

\end{proof}

\begin{remark}\label{cav} 1. If
$C^\infty (X^{2n})$ is  locally smoothly contractible, by \cite[Theorem 5.2]{Mostow1979}
the de Rham cohomology $H^* (\Om (X^{2n}), d)$ coincides with the singular cohomology  $H^* (M, \R)$), since
$ X^{2n}$   admits  smooth partitions of unity, see Proposition \ref{fin}. 

2. In \cite[Proposition 5.4]{Cavalcanti2005} Cavalcanti proved  that   the hard Lefschetz property
on a compact symplectic manifold implies   $Im \, \delta \cap \ker d = Im \, d \cap Im \, \delta$, see also \cite{Merkulov1998}. His theorem   can be
proved  word-by-word for stratified symplectic spaces satisfying the conditions  of Proposition \ref{hlv}, since the main ingredient  of the proof is    Proposition \ref{hlv}.

3. It is  interesting to know whether   we can extend  the   symplectic  cohomology theory  developed in  \cite{TY2009}, \cite{LV2011} to 
stratified symplectic  spaces satisfying the  conditions  of Proposition \ref{hlv}, since  the main  ingredient  of this theory is the existence of a  Leftschetz
decomposition.
\end{remark}

\section{Conclusions}
In this paper   we have worked  out a natural  refinement  of the concept of a smooth structure on a stratified   space, which is well suited
for the study of stratified symplectic  spaces.  In this refined concept  there  are two natural classes  of smooth structures on stratified
symplectic spaces:   weakly  symplectic   smooth structures  and Poisson smooth structures.  We show that  Poisson smooth structures on stratified symplectic spaces $X$, especially those  are also  weakly symplectic, enjoy  many  properties  of  symplectic manifolds, if the regular strata  of $X$ are of the same  dimension. We suggest to  study  stratified  symplectic  spaces,  whose  regular strata  are of varying  dimension,  for  future works.

\medskip

 {\bf Funding} This work  was supported   by RVO: 67985840  and by Grant of ASCR  Nr100190701 (H.V. L. and J. V.),  by MSM 0021620839 and GACR 20108/0397 (P.S.).
A part of this paper was written while H.V.L.  was visiting the ASSMS, GCU, Lahore-Pakistan. She thanks  ASSMS for their hospitality and  financial support.

{\bf Acknowledgement}  
 H.V.L.  thanks  Dmitry Panyushev  and Nguyen Tien Zung for  discussions, which have  helped to shape the final form of this paper.

\bigskip
\author{H\^ong V\^an L\^e},
\address{Institute of Mathematics of ASCR,
Zitna 25, 11567 Praha 1, Czech Republic}\\
\author{Petr Somberg},
\address{ Mathematical Institute,
Charles University,
Sokolovska 83,
180 00 Praha 8,
Czech Republic.}\\
\author{Ji\v ri Van\v zura},
\address{Institute of Mathematics of  ASCR, Zizkova 22, 61662 Brno, Czech Republic}

\end{document}